\documentclass[11pt]{amsart}

\usepackage{amsmath,amssymb}
\usepackage{url}
\usepackage{graphicx}
\usepackage{color}
\usepackage{tikz}
\usepackage{tikz-3dplot}
\usepackage[labelformat=empty]{subfig}
\usepackage[euler-digits]{eulervm}
\usepackage[all]{xy}
\usepackage{tikz}
\usetikzlibrary{matrix,arrows}
\usepackage{color}
\usepackage{soul}
\usepackage{enumerate}

\address{Department of Mathematics, Yale University, 10 Hillhouse Avenue, New Haven, CT 06511}
\author[T.Leake]{Timothy Leake}
\email{timothy.leake@yale.edu}
\author[D.Ranganathan]{Dhruv Ranganathan}
\email{dhruv.ranganathan@yale.edu}

\date{\today}

\newtheorem{theorem}{Theorem}

\newtheorem{lemma}[theorem]{Lemma}
\newtheorem{proposition}[theorem]{Proposition}
\newtheorem{definition}[theorem]{Definition}

\newtheorem{conjecture}[theorem]{Conjecture}

\newtheorem{example}[theorem]{Example}
\newtheorem{remark}[theorem]{Remark}



\newcommand{\ZZ} {{\mathbb Z}}




\newcommand{\cal}{\mathcal}

\def\cM{{\cal M}}



\newcommand{\Mbar}{\overline{\cM}}

\begin{document}

\title{Brill--Noether Theory of Maximally Symmetric Graphs}

\pagestyle{plain}

\begin{abstract}
We analyze the Brill--Noether theory of trivalent graphs and multigraphs having largest possible automorphism group in a fixed genus. For trivalent multigraphs with loops of genus at least $3$, we show that there exists a graph with maximal automorphism group which is Brill--Noether special. We prove similar results for multigraphs without loops of genus at least $6$, as well as simple graphs of genus at least $7$. This analysis yields counterexamples, in any sufficiently large genus, to a conjecture of Caporaso.
\end{abstract}

\maketitle

\section{Introduction}\label{sec: introduction}

A graph $\Gamma$ of genus $g$ is said to be \textbf{Brill--Noether general} if for all positive integers $r$ and $d$ such that the number
\[
\rho^r_d(g) = g-(r+1)(g-d+r)
\]
is negative, there exist no effective divisors on $\Gamma$ of degree $d$ and rank at least $r$. We begin by recalling the following conjecture of Caporaso~\cite[Conjecture 6.6(1)]{Cap11}.
\begin{conjecture}\label{conj: caporaso}
Assume $g\geq 2$. Let $\Gamma$ be a trivalent graph of genus $g$ with largest possible automorphism group (among trivalent graphs of genus $g$). Then $\Gamma$ is Brill--Noether general.
\end{conjecture}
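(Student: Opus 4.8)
The plan is to establish Brill--Noether generality by ruling out, for every pair $(r,d)$ with $\rho^r_d(g)<0$, the existence of an effective divisor $D$ on $\Gamma$ with $\rank(D)\ge r$. Since $\Gamma$ is trivalent it has $2g-2$ vertices and $3g-3$ edges, and the most constraining negative-$\rho$ range is $r=1$: because $\rho^1_d(g)=2d-g-2$, generality here amounts to the gonality lower bound that no effective divisor of degree $d<\lceil (g+2)/2\rceil$ has positive rank. I would treat this $r=1$ case first and then bootstrap to general $r$ by the standard base-point reduction --- if $\rank(D)\ge r$ then subtracting $r-1$ general points yields a class of degree $d-r+1$ and rank at least $1$ --- converting each higher special divisor into a pencil whose degree must be compared against the gonality. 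Throughout I would work intrinsically with Baker--Norine divisor classes on the combinatorial graph, computing ranks via $v_0$-reduced representatives and Dhar's burning algorithm, rather than attempting to realize $\Gamma$ as the skeleton of a Brill--Noether general curve and invoke Baker's specialization lemma, since the required symmetric smoothing is not obviously available.

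The symmetry is the main lever. The group $\Aut(\Gamma)$ acts on $\mathrm{Pic}^d(\Gamma)$ and preserves the Brill--Noether locus $W^r_d(\Gamma)=\{[D]:\rank(D)\ge r\}$, so if any special class exists then its entire $\Aut(\Gamma)$-orbit consists of special classes; for a vertex- and edge-transitive $\Gamma$ this forces an abundance of equivalent pencils and lets me assume a candidate $v_0$-reduced divisor is as symmetric as the stabilizer of $v_0$ permits. I would first pin down which trivalent graph is maximally symmetric in each genus $g$ --- a group-theoretic/census input, in the spirit of the Foster classification of arc-transitive cubic graphs, supplying a highly connected vertex-transitive model. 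I would then attack the gonality lower bound through connectivity-based invariants: the treewidth bound $\operatorname{gon}(\Gamma)\ge \operatorname{tw}(\Gamma)$ of van Dobben de Bruyn--Gijswijt, sharpened by the scramble number, together with the large edge-connectivity that transitivity forces, aiming to push the gonality up to the generic value $\lceil (g+2)/2\rceil$.

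The hard part will be precisely this gonality lower bound. Dhar's burning algorithm shows that a degree-$d$ positive-rank divisor is equivalent to one supported on a small vertex set whose complement does not fire back into it, which is exactly a statement about small separating structures in $\Gamma$; the argument therefore succeeds only if the maximally symmetric model has no small separators --- no clusters of parallel edges or loops and no low-order cut compatible with the symmetry. This is the step I expect to control the entire result, and it is where the transitivity of $\Gamma$ must be upgraded from a formal symmetry into a genuinely quantitative expansion estimate. The essential obstacle, and the diagnostic the proof must confront, is that maximal symmetry within a fixed genus can instead force a degenerate extremal graph dominated by loops or parallel edges; in that regime the small separators do exist, the reduced-divisor argument exhibits a pencil of degree below $\lceil (g+2)/2\rceil$, and the gonality bound breaks. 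Verifying that this degenerate configuration does not arise for the maximally symmetric $\Gamma$ is exactly what the proof must ultimately accomplish.
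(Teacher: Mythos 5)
There is a genuine and fatal gap: the statement you are trying to prove is false, and the paper in question is devoted to \emph{disproving} it. The failure mode you flag in your final paragraph as ``the diagnostic the proof must confront'' is exactly what happens. By the analysis of van Opstall and Veliche, the trivalent (multi)graphs of genus $g$ with the largest automorphism groups are not transitive, highly connected expanders --- they are tree-like: binary-tree structures $T_n$ with loops, cones, or pinched tetrahedra/$K_{3,3}$'s attached at the leaves, whose huge symmetry comes from iterated wreath products swapping isomorphic branches. Your transitivity premise is quantitatively untenable: by Tutte's theorem an arc-transitive cubic graph on $n$ vertices has automorphism group of order at most $48n$, i.e.\ linear in $g$, whereas the tree-like constructions achieve automorphism groups exponential in $g$. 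So the maximally symmetric graph is precisely the ``degenerate'' configuration with many small separators (every edge joining an appendage to the tree is a bridge), and your Dhar/treewidth strategy for pushing the gonality up to $\lceil (g+2)/2\rceil$ cannot get off the ground: the treewidth of these graphs is bounded (they are near-trees), so the lower bound it yields is a constant independent of $g$.

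Concretely, the paper shows these extremal graphs carry explicit low-degree pencils. When the loop (or cone) construction applies, flipping every appendage is a hyperelliptic involution with tree quotient, so the graph has a divisor of degree $2$ and rank $1$; in the remaining genera ($g = 3\cdot 2^m + 1$ and the simple-graph cases) one exhibits divisors such as $3v$, $4v$, or $3v+2w$ of rank $1$, using chip-firing across bridges to reduce to a small central subgraph. Since $\rho^1_d(g) = 2d - g - 2 < 0$ for these degrees once $g$ is moderately large, the graphs are Brill--Noether special, and in genera of the form $3\cdot 2^m$, $3(2^m+2^p)$, etc., the maximally symmetric graph is \emph{unique}, so the conjecture cannot be rescued by choosing a different extremal graph. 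The only regime where your proposed conclusion holds is small genus (e.g.\ the tetrahedron, $K_{3,3}$, the cube, the Petersen graph), which is verified by direct computation, not by any transitivity or expansion argument.
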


\noindent The conjecture is motivated by an analogy between the moduli space $\Mbar_g$ of Deligne-Mumford stable curves and the moduli space $\cM_g^{trop}$ of tropical curves. The latter is the space parametrizing genus $g$ metric graphs with integer vertex-weights. The divisor theory of algebraic curves is related to that of graphs by means of Baker's Specialization Lemma~\cite{B08} and its generalizations~\cite{AC13}.

For fixed $g$, there is a natural order reversing bijection between the set of strata of these two moduli spaces. The classical moduli space $\Mbar_g$ is stratified by the combinatorial type of the dual graphs of stable curves, while $\cM_g^{trop}$ is stratified by the underlying combinatorial type of the metric graphs. That is, given a combinatorial graph $\Gamma$, there is a cone in the moduli space $\cM^{trop}$ parametrizing all metric graphs having $\Gamma$ as their underlying combinatorial graph. In particular, the top dimensional stratum on the classical side are the points of $\Mbar_g$ corresponding to smooth curves. The dual graph of a smooth genus $g$ curve is a single vertex with weight $g$. On the other hand, the top dimensional stratum on the tropical side corresponds to trivalent metric graphs with no nonzero vertex weights. A general smooth curve has no automorphisms, so Caporaso proposes that its analog should be a trivalent graph with the greatest possible number of automorphisms. Since a general curve is Brill--Noether general, Caporaso conjectures that the trivalent graphs with largest number of automorphisms are also Brill--Noether general. Here we prove that in any sufficiently large genus, there exists a trivalent graph (or multigraph) with largest possible automorphism group that is not Brill--Noether general.

In what follows, we will refer to trivalent graphs or multigraphs achieving largest possible automorphism group as \textbf{maximally symmetric}.

\begin{theorem}\label{thm: loops}
Let $g\geq 3$. There exists a maximally symmetric genus $g$ trivalent multigraph (possibly with loops) that is not Brill--Noether general.
\end{theorem}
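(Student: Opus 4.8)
The plan is to exhibit a single explicit graph that is simultaneously maximally symmetric and maximally Brill--Noether special, and to locate the genus threshold $g=3$ as the point where loops become the most efficient device for producing symmetry. Let $\Gamma_g$ be the graph obtained from the most balanced trivalent tree $T$ on $g$ leaves by attaching a loop at each leaf (for $g=3$ this is the three-pointed star with a loop on each arm). Each loop raises the first Betti number by one, so $\Gamma_g$ has genus $g$; and each leaf of $T$ has valence one while its loop supplies the remaining two, so $\Gamma_g$ is trivalent. Two things must be checked: that $\Gamma_g$ achieves the largest automorphism group among all genus $g$ trivalent multigraphs, and that $\Gamma_g$ fails to be Brill--Noether general. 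I would dispatch the second, easy, point first and reserve the effort for the first.

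The crucial observation for Brill--Noether specialness is that loops are invisible to divisor theory. Firing a vertex sends one chip along each incident half-edge, and the two half-edges of a loop return their chips to the same vertex, so the reduced Laplacian of $\Gamma_g$ coincides with that of the graph $\Gamma_g^\circ$ obtained by deleting every loop (while keeping all vertices). Hence $\Gamma_g$ and $\Gamma_g^\circ$ have identical Picard groups and, for every divisor $D$, identical Baker--Norine ranks. But $\Gamma_g^\circ$ is exactly the tree $T$, whose unique spanning tree is itself; its Jacobian is therefore trivial, and so any two points of $\Gamma_g$ are linearly equivalent. Taking $D$ to be a single vertex, $D$ is effective of degree $1$ with $r(D)\geq 1$, and since $\rho^1_1(g)=g-2(g-1+1)=-g<0$, the graph $\Gamma_g$ is Brill--Noether special. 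In fact every degree-$d$ divisor on $\Gamma_g$ has rank exactly $d$, so $\Gamma_g$ is as special as possible; note that this argument never uses $g\geq 3$, so the genus hypothesis must enter only through the maximality claim.

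The main obstacle is thus the purely combinatorial assertion that $\Gamma_g$ maximizes $|\Aut|$ over all genus $g$ trivalent multigraphs, where automorphisms are taken in the sense relevant to $\cM_g^{trop}$: a loop contributes a $\ZZ/2$ swapping its two half-edges, and a bundle of parallel edges contributes the symmetric group permuting them. I would argue this by a weighted accounting in which every independent cycle must be paid for by one unit of genus, comparing the symmetry bought against the half-edges consumed by the three elementary devices for raising genus at a trivalent vertex: a loop consumes two of the three half-edges at a single vertex to buy a factor of $2$, whereas a bundle of parallel edges ties up two half-edges at each of two vertices for a comparable factor. Since loops leave more valence free to build a symmetric backbone, an extremal graph should use the full complement of $g$ loops, which forces a tree backbone, and should then maximize the automorphisms of the decorated tree, selecting the most balanced $T$. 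The threshold $g\geq 3$ is exactly where this heuristic becomes correct: at $g=2$ it fails, since the theta graph on two vertices has automorphism group of order $12$, beating the looped dumbbell of order $8$, and the theta graph is Brill--Noether general. I expect the genuine difficulty to lie in making this optimization rigorous, namely ruling out all mixed configurations of loops, multiple edges, and symmetric simple backbones and confirming that for every $g\geq 3$ the looped tree $\Gamma_g$ strictly dominates; this is the step that truly requires $g\geq 3$ and that separates this case from the loopless and simple-graph cases treated later.
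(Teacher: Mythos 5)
There are two genuine gaps in your proposal, and the first one is fatal. The claim you defer to a ``weighted accounting'' heuristic --- that the balanced looped tree $\Gamma_g$ is maximally symmetric for every $g\geq 3$ --- is false in infinitely many genera, so no amount of rigor can establish it. By the classification of van Opstall--Veliche \cite{OV06}, on which the paper relies, when $g=3\cdot 2^m+1$ the extremal multigraph with loops is not a looped tree at all: it is built around a central triangle with looped branches attached (the graph $C_7$ of Figure~\ref{fig: Cn}, for instance). Concretely, in genus $7$ the triangle graph has $6\cdot 8^3=3072$ automorphisms (counting loop flips, as the paper does), while your balanced looped tree on $T_7$ has only $2^7\cdot 16=2048$; the comparison is $48$ versus $16$ if loop flips are not counted, so the conclusion is independent of that convention. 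Likewise, in genera $g=3(2^m+2^p)$ the unique maximally symmetric looped multigraph is a looped tree on a different, unbalanced tree rather than on $T_g$ (see Definition~\ref{def: Cn} and Remark~\ref{rem: sharpness}). The paper avoids this difficulty entirely by taking the extremal graphs $C_g$ of \cite{OV06} as input, and it must then treat the exceptional genera $g=3\cdot 2^m+1$ with a separate divisor argument: the divisor $3v$ at a vertex $v$ of the central triangle has degree $3$ and rank $1$, which is special because $\rho^1_3(g)<0$.

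Your specialness argument also rests on a convention that is not the one in force. You assert that loops are invisible to divisor theory, so that $\Gamma_g$ has trivial Jacobian and a single vertex already has rank $1$. That is the correct computation for naive chip-firing on the multigraph, but it is not the divisor theory against which Brill--Noether generality is measured in Caporaso's conjecture: there a genus-$g$ graph with loops carries the divisor theory of its loop subdivision (equivalently, of the associated metric graph), under which your $\Gamma_g$ has Jacobian $(\ZZ/2\ZZ)^g$ and gonality $2$, not $1$. Were loops literally invisible, Riemann--Roch with respect to the genus $g$ would fail, and every graph with a loop would be Brill--Noether special for trivial reasons, emptying the conjecture of content for this class --- plainly not the intended reading. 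Accordingly, the paper proves specialness by exhibiting the involution of the subdivision $C_g^0$ that flips each subdivided loop, concluding that $C_g^0$ is hyperelliptic, transferring this back to $C_g$ via Caporaso's lemma \cite[Lemma~4.1]{Cap12}, and finally using $\rho^1_2(g)=2-g<0$. That last inequality is precisely where the hypothesis $g\geq 3$ enters the specialness argument, contrary to your assertion that the genus bound is needed only for the symmetry maximization.
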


Trivalent multigraphs with loops are the dual graphs of stable curves in the zero stratum of $\Mbar_g$. We may also analyze Caporaso's conjecture for dual graphs of curves which have only smooth components. These are multigraphs without loops.

\begin{theorem}\label{thm: multiple-edges}
Let $g\geq 6$. There exists a maximally symmetric genus $g$ trivalent multigraph with no loops that is not Brill--Noether general.
\end{theorem}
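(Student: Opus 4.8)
The plan is to produce one explicit maximally symmetric loopless multigraph and then exhibit a single divisor on it that violates the Brill--Noether inequality. Since a trivalent genus $g$ graph has $2g-2$ vertices and $3g-3$ edges, it can carry at most $g-1$ pairs of parallel edges (``bananas''): each banana occupies two vertices, so using all $g-1$ of them leaves every vertex with exactly one further half-edge, forcing the remaining $g-1$ edges to form a perfect matching. Contracting each banana then yields a connected $2$-regular graph, i.e. a single cycle. I therefore take $\Gamma_g$ to be the \emph{banana necklace}: vertices $x_1,\dots,x_{g-1},y_1,\dots,y_{g-1}$, a doubled edge joining $x_i$ to $y_i$ for each $i$, and a single edge joining $y_i$ to $x_{i+1}$ cyclically. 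The $g-1$ edge-swaps inside the bananas are independent commuting involutions, and the cyclic arrangement contributes the dihedral group of the underlying $(g-1)$-cycle, so that $|\Aut(\Gamma_g)| = 2^{g-1}\cdot 2(g-1)$.

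The first thing to prove is that $\Gamma_g$ really is maximally symmetric, and this is the step I expect to be the main obstacle, since it is not a computation on a single graph but a bound over all loopless trivalent multigraphs of genus $g$. The key point is that the factor $2^{g-1}$ coming from parallel edges is exponential, whereas any symmetry permuting the vertices of a simpler structure is only polynomially large; thus a configuration using fewer than the maximal number of bananas trades a guaranteed factor of $2$ for at most polynomially many extra automorphisms and cannot compete once $g$ is large. One must also check that, among arrangements using all $g-1$ bananas, the cyclic wiring maximizes the residual symmetry (it realizes the full dihedral group of the contracted cycle). Carrying this out rigorously, including the comparison against the exceptionally symmetric small cases (such as $K_4$, $K_{3,3}$ and the Petersen graph) that pin down the precise threshold, is the delicate extremal part of the argument, and is presumably the source of the bound $g\geq 6$ rather than the $g\geq 3$ of Theorem~\ref{thm: loops}.

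On the Brill--Noether side I would work entirely with Baker--Norine divisor theory on the combinatorial graph. The basic tool is the chip-firing move for a single banana, $(x_i)+(y_i)\sim (y_{i-1})+(x_{i+1})$, together with its extension: firing a run of consecutive bananas translates the two boundary chips outward along the necklace, and overlapping such runs lets one concentrate chips onto a chosen banana. Dhar's burning algorithm and $q$-reduced divisors then give an effective way to compute the rank of any candidate. The subtlety, which makes the problem genuinely specific to trivalent graphs, is that each banana tends to \emph{trap} chips: a lone chip cannot cheaply cross a banana, so naive low-degree divisors fail, and indeed a direct reduced-divisor computation shows that $\sum_i (x_i)$ has rank $0$. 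The special divisor must instead balance chips on both sides of each relevant banana. For the base case $g=6$ the target is a divisor $D$ of degree $5$ with $r(D)\geq 2$; since $K_{\Gamma_g}=\sum_v (v)$, such a $D$ is half-canonical and its residual $K-D$ has the same rank, mirroring the $g^2_5$ of a plane quintic. As $\rho^2_5(6) = 6 - 3(6-5+2) = -3 < 0$, exhibiting such a $D$ and verifying $r(D)=2$ by Dhar's algorithm proves that $\Gamma_6$ is Brill--Noether special. For general $g\geq 6$ I would construct an analogous balanced half-canonical divisor of degree $g-1$ and rank $r$ with $(r+1)^2 > g$, so that $\rho^r_{g-1}(g) = g-(r+1)(g-(g-1)+r) = g-(r+1)^2 < 0$; the remaining work, and the place where the chip-concentration estimates must be made quantitative, is the uniform lower bound on the rank of this divisor.
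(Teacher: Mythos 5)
Your graph $\Gamma_g$ is exactly the paper's ``loop of loops'' graph $\tilde{C}_g$, and the step you yourself flagged as the main obstacle --- proving it is maximally symmetric --- is not merely hard but false for infinitely many $g\geq 6$, including $g=6$ itself. The competitors you dismiss as having only polynomially many automorphisms are the van Opstall--Veliche graphs $C'_g$, built by attaching \emph{cones} (triangles with one edge doubled) to the leaves of binary trees, and these are also exponentially symmetric: under your own counting convention each cone contributes a factor of $4$ (swap its two base vertices, swap its two parallel edges) per \emph{two} units of genus --- the same rate $2^g$ as your bananas --- while every internal branching vertex of the tree contributes a further independent factor of $2$. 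So the tree-with-cones graphs generically win. Concretely, in genus $6$ the star of three cones has $6\cdot 4^3=384$ automorphisms versus $2^5\cdot 10=320$ for $\tilde{C}_6$; in genus $8$ the tree $T_4$ with four cones has $8\cdot 4^4=2048$ versus $2^7\cdot 14=1792$; in genus $12$ the count is $6\cdot 2^3\cdot 4^6=196608$ versus $2^{11}\cdot 22=45056$. Indeed the paper's low-genus analysis states that in genera $6$ and $8$ the \emph{unique} maximally symmetric loopless multigraphs are $C'_6$ and $C'_8$, not your necklace, and Remark~\ref{rem: sharpness} (citing van Opstall--Veliche) gives uniqueness of $C'_g$ in every genus of the form $3\cdot 2^m$, $3(2^m+1)$, $3(2^m+2^p)$, $3(2^m+2^p+1)$. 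Since Theorem~\ref{thm: multiple-edges} requires a maximally symmetric special graph in \emph{every} genus $g\geq 6$, and your candidate is not maximally symmetric in $g=6,8,12,\dots$, the argument cannot be repaired by restricting to the genera (per the paper: $5$, $7$, $9$) where the necklace genuinely is maximally symmetric.

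The paper's route is structurally different and avoids this trap: it never proves maximality of a graph of its own devising, but takes the classified graphs $C'_g$ (and, in genera $7$ and $9$, $\tilde{C}_g$) and exhibits special divisors on them --- for $g\neq 3\cdot 2^m+1$, $C'_g$ is hyperelliptic via the involution reflecting every cone and swapping every doubled edge (tree quotient), and for $g=3\cdot 2^m+1$ the divisor $3v$ at a cone apex has degree $3$ and rank $1$. A secondary weakness of your proposal is on the divisor side: where the necklace does appear, the paper uses $D=2v+2w$ on a doubled edge, of degree $4$ and rank $1$, which is special as soon as $\rho^1_4(g)=6-g<0$, i.e.\ $g\geq 7$. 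Your plan of a half-canonical divisor of degree $g-1$ and rank $r$ with $(r+1)^2>g$ demands rank growing like $\sqrt{g}$, is never constructed, and its rank lower bound is left unproved --- your own observation that bananas trap chips is evidence against it --- so even on the graphs where your candidate is legitimate, the Brill--Noether half of the proposal remains a plan rather than a proof.
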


Finally, we analyze the Brill--Noether theory of maximally symmetric trivalent simple graphs. That is, graphs without loops or multiple edges.

\begin{theorem}\label{thm: simple-graphs}
Let $g\geq 7$. There exists a maximally symmetric genus $g$ trivalent simple graph that is not Brill--Noether general.
\end{theorem}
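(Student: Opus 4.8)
The plan is to mirror the constructions behind Theorems~\ref{thm: loops} and~\ref{thm: multiple-edges}, upgrading them to the simple setting. For each $g \geq 7$ I would exhibit an explicit trivalent simple graph $\Gamma_g$ and verify two things independently: that $\Gamma_g$ is Brill--Noether special, and that no trivalent simple graph of genus $g$ has a strictly larger automorphism group. The first task is pure divisor theory; the second is group theory, and it is where the real difficulty lies.

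For the construction, the feature peculiar to simple graphs is that the cheapest symmetries exploited earlier are no longer available. A loop contributes a $\ZZ/2$ from its reflection, and a pair of parallel edges contributes a $\ZZ/2$ from their transposition; stringing together many such pieces produces automorphism groups with large abelian or wreath factors. In a simple graph these local flips must be replaced by the smallest symmetric simple trivalent ``gadget'' carrying a comparable symmetry and admitting the same low-valence attachment. I would then assemble $\Gamma_g$ as a highly symmetric arrangement, along a cyclic or tree-like backbone, of copies of such a gadget connected by bridges or short connectors so as to keep the graph trivalent and simple. Because the gadget now occupies several vertices, the genus bookkeeping closes up only once $g$ is large enough, and in general it forces a short case analysis according to the residue of $g$ modulo the gadget size; this is the source of the hypothesis $g \geq 7$.

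The heart of the argument is proving maximality of $|\Aut(\Gamma_g)|$ among simple trivalent graphs of genus $g$. Here I would first record a structural upper bound: for a connected cubic graph the stabilizer of a vertex is severely constrained (in the spirit of Tutte's theorem on cubic arc-transitive graphs), while for a graph built as a tree of blocks the automorphism group is governed by its induced action on the block--cut tree, so that it factors as an iterated extension of the automorphism groups of the blocks by the symmetries permuting isomorphic branches. Comparing this bound across all possible decompositions of a genus $g$ simple cubic graph, one shows that the maximum is realized precisely by the symmetric arrangement defining $\Gamma_g$. The main obstacle is exactly this comparison: ruling out every competing simple configuration, since the simplicity constraint both removes the cheapest sources of symmetry and creates several nearby candidates of comparable order that must be eliminated one at a time.

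Finally, Brill--Noether specialness should follow cheaply and uniformly in $g$. I would exhibit an involution of $\Gamma_g$ whose quotient is a tree, so that $\Gamma_g$ is hyperelliptic and carries an effective divisor of degree $2$ and rank $1$; since $\rho^1_2(g) = 2-g < 0$ for all $g \geq 3$, this alone certifies that $\Gamma_g$ is not Brill--Noether general. Alternatively, the special divisor can be imported directly from the multigraph models of Theorems~\ref{thm: loops} and~\ref{thm: multiple-edges}, since the modifications used to make the graph simple are local and can be arranged to preserve the relevant divisor class. Either way the divisor-theoretic input is routine once the symmetric graph is in hand, confirming that the combinatorial classification of the maximizer is the crux of the proof.
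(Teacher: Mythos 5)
Your proposal has two genuine gaps, one structural and one mathematical. Structurally, you propose to prove from scratch that your graphs $\Gamma_g$ maximize the automorphism group among simple trivalent graphs of genus $g$. This is essentially re-proving the classification of van Opstall and Veliche~\cite{OV10}, which the paper simply cites; your sketch (Tutte-type vertex-stabilizer bounds plus block--cut-tree considerations) does not come close to carrying out the required comparison against all competing genus-$g$ cubic graphs. Worse, assembling $\Gamma_g$ yourself from symmetric ``gadgets on a backbone'' cannot produce the correct graphs in the low-genus cases your theorem must cover: at $g=7$ the unique maximizer is a sporadic graph found by exhaustive search, and at $g=8$ it is the Heawood graph, which is arc-transitive and not of any assembled form; any gadget construction would lose to these, so the graph you certify as special would not be maximally symmetric.

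Mathematically, the step you call ``cheap and routine'' is where the paper does its real work, and your primary plan for it fails. The maximally symmetric simple trivalent graphs are not hyperelliptic, so no involution with tree quotient exists: each pinched-tetrahedron or pinched-$K_{3,3}$ block of the graphs $C_g''$ has genus $3$ or $4$, and no involution of such a block (nor a swap of two isomorphic blocks) folds all of its independent cycles, so the quotient of the whole graph always has positive genus. The Heawood graph at $g=8$ is an even starker counterexample to your plan: the paper notes it has Brill--Noether \emph{general} gonality, so no low-degree rank-$1$ divisor exists at all, and its specialness must instead be certified by a divisor of degree $7$ and rank $2$. Your fallback---``importing'' the divisor class from the multigraph models of Theorems~\ref{thm: loops} and~\ref{thm: multiple-edges}---also does not survive: those divisors are the degree-$2$ hyperelliptic class (or $3v$ on a cone vertex), and replacing loops or cones by pinched gadgets destroys these classes. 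The paper must instead construct new divisors $4v$, $5v$, or $3v+2w$ and verify rank at least $1$ via a sequence of chip-firing lemmas (Lemmas~\ref{lem1}--\ref{lem5}) tailored to the pinched gadgets and to each possible central connector (cycle, $K_{2,3}$, or two glued triangles), with a case analysis over the structure of $C_g''$. None of that is routine, and none of it is in your proposal.
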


Our methods are explicit and combinatorial. We rely on the analysis of maximally symmetric trivalent graphs pursued by van Opstall and Veliche in~\cite{OV06,OV10}. In each sufficiently large genus, we exhibit a maximally symmetric graph together with a divisor of low degree and rank $1$. 

The preceding theorems are sharp. The maximally symmetric genus $2$ multigraph with loops is Brill--Noether general. When $g\leq 5$, all genus $g$ maximally symmetric multigraphs without loops are Brill--Noether general. The same is true for maximally symmetric simple graphs when $g\leq 6$.

\begin{remark}\label{rem: sharpness}\textnormal{
In~\cite[Proposition~6.5]{OV06} it is proved that when the genus $g$ is $3\cdot 2^m$, $3 (2^m+1)$, $3(2^m+2^p)$, or $3(2^m+2^p+1)$ (where $p>m\geq 0$), there is a unique maximally symmetric multigraph with no loops. A direct application of the proof of this result also shows that when $g$ is $3\cdot 2^m$, or $3 (2^m+2^p)$ there is a unique maximally symmetric multigraph with loops. In other words, in these genera, the theorems above preclude the existence of maximally symmetric Brill--Noether general graphs.}
\end{remark}

The heuristic used by van Opstall and Veliche is that maximally symmetric trivalent graphs should be ``as close to trees as possible''. As we will see in Section~\ref{sec: proof-of-theorems}, this is achieved by attaching trees to a small graph, such as a single vertex or a triangle, and placing appendages on the leaves of the trees to contribute genus. Our analysis suggests that graphs of this general structure are not Brill--Noether general for large genus. Together with Remark~\ref{rem: sharpness}, this leads us to pose the following conjecture.

\begin{conjecture}
Let $\Gamma$ be a genus $g$ trivalent maximally symmetric graph or multigraph (with or without loops). If $g$ is sufficiently large, then $\Gamma$ is not Brill--Noether general. 
\end{conjecture}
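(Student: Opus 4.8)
The displayed conjecture is the universal strengthening of Theorems~\ref{thm: loops}, \ref{thm: multiple-edges}, and \ref{thm: simple-graphs} from ``there exists a Brill--Noether special maximizer'' to ``every maximizer is Brill--Noether special,'' so the plan is to run the same engine over the whole list of maximally symmetric graphs. The reduction is uniform: to certify that a genus $g$ graph $\Gamma$ is not Brill--Noether general it suffices to exhibit a divisor $D$ of rank $\geq 1$ and degree $d$ with $\rho^1_d(g) = 2d-g-2 < 0$, that is, with $d \leq \lfloor (g+1)/2 \rfloor$. Consequently it is enough to bound the gonality of every maximally symmetric $\Gamma$ by an absolute constant $c$ (for instance $c = 2$ in the loop case), since then $\rho^1_c(g) < 0$ for all $g > 2c-2$; the genus threshold in each regime is then pinned down by the smallest genus at which a maximizer realizing the bound first appears.

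The first step is structural. Building on van Opstall--Veliche~\cite{OV06,OV10} and the heuristic recalled just before the conjecture, I would prove that in every sufficiently large genus, and in each of the three categories, any graph attaining the maximal value of $|\Aut|$ is ``tree-like with uniform appendages'': a bounded core (a vertex, an edge, or a triangle), balanced trees growing from the core, and a single isomorphism type of small genus-contributing appendage at each leaf. Symmetry should force both the balancing of the trees and the common isomorphism type of the appendages, since any asymmetry could be rearranged into a graph with strictly larger automorphism group, contradicting maximality.

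Given this structure, the second step produces the low-degree divisor. Each appendage admits a low-degree harmonic morphism to a segment --- for a loop, the degree-$2$ reflection folding it onto an interval --- and these should glue to a harmonic morphism $\Gamma \to T$ onto a tree $T$ of degree equal to the appendage's local gonality $c$; pulling back a point of $T$ yields a candidate divisor of degree $c$. The step I expect to be the main obstacle is verifying that this divisor genuinely has rank at least $1$ rather than merely the correct degree, since ranks of divisors on graphs are governed by chip-firing and not by degree alone. I would establish $\rank \geq 1$ using Dhar's burning algorithm and the theory of reduced divisors, exploiting the large automorphism group to reduce the check to a finite set of vertex orbit representatives; the subtlety is that a naive transport of chips across the core can strand them at branch points, so the argument must propagate through the core with care.

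Finally, once $c$ is fixed and the rank-$1$ divisor is in hand, $\rho^1_c(g) < 0$ for large $g$ gives Brill--Noether specialness, and Remark~\ref{rem: sharpness} already records uniqueness of the maximizer in infinitely many genera, so in those genera no Brill--Noether general maximizer can exist. The genuine difficulty in passing from the three theorems to the full conjecture is the classification in \emph{every} large genus: van Opstall--Veliche determine the maximizers only in special genera, so one must both enumerate all maximizers in the remaining genera and check that the constant gonality bound holds uniformly across all of them.
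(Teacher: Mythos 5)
The statement you have attempted is not a theorem of the paper: it is posed there as an \emph{open conjecture}. The paper's actual results (Theorems~\ref{thm: loops}, \ref{thm: multiple-edges}, \ref{thm: simple-graphs}) establish only the existential weakening --- in each sufficiently large genus, \emph{some} maximally symmetric graph is Brill--Noether special --- together with the observation in Remark~\ref{rem: sharpness} that in certain special genera the maximizer is unique, so that the full conjecture is known only in those genera. There is therefore no proof in the paper to compare against, and what you have written is a research program rather than a proof; it contains genuine gaps, some of which you flag yourself.

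The first and decisive gap is the structural classification. Your claim that every maximizer in every large genus is ``tree-like with uniform appendages'' is precisely what is missing from the literature: van Opstall--Veliche~\cite{OV06,OV10} prove upper bounds on $|\Aut(\Gamma)|$ and construct graphs attaining them, but they determine \emph{all} graphs attaining the bound only in the special genera of Remark~\ref{rem: sharpness}. The heuristic ``any asymmetry could be rearranged into a graph with more automorphisms'' is not an argument, and non-uniqueness really occurs: in genus $9$ both $C_9'$ and $\tilde C_9$ are maximizers among loopless multigraphs, and they require different specialness certificates (one is hyperelliptic, the other carries a degree-$4$ rank-$1$ divisor). Without a classification, the universal quantifier over maximizers cannot be discharged. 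The second gap is the mechanism itself: bounding gonality by an absolute constant is not a valid universal strategy even among the paper's own examples, since the genus-$8$ Heawood graph is a maximizer whose gonality is Brill--Noether general, its specialness being witnessed instead by a divisor of degree $7$ and rank $2$. So a proof must be prepared to detect higher-rank special divisors, not only rank-$1$ ones of low degree. Relatedly, your harmonic-morphism gluing step is not automatic and is not how the paper's rank computations work: the paper proves rank bounds by explicit chip-firing lemmas (Lemmas~\ref{lem1}--\ref{lem5}) whose divisor degrees depend on the core ($3$, $4$, or $5$), and transporting chips across the core is exactly where those lemmas do real work. Until both gaps are filled, the statement remains what the paper says it is: a conjecture.
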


\subsection*{Acknowledgements}
It is a pleasure to thank Sam Payne for suggesting this project, as well as for his guidance and support. We are also grateful to Yoav Len and the anonymous referees for helpful comments and corrections on a previous version of this paper.

\section{Background}\label{sec: background}

\subsection{Divisors on Graphs}\label{subsec: divisors}
We now briefly recall the fundamentals of divisor theory on graphs and Brill--Noether theory. For further details, see~\cite{B08, BN09,Cap11}.

A graph $\Gamma$ will mean a finite connected graph possibly with loops and multiple edges. The vertex set of $\Gamma$ will be denoted by $V(\Gamma)$ and edge set $E(\Gamma)$. We will be explicit when restricting our analysis to the cases of graphs without multiple edges or loops. The genus of $\Gamma$, denoted $g(\Gamma)$, is the first Betti number of $\Gamma$. Since $\Gamma$ is a connected graph, we have
\[
g(\Gamma) = |E(\Gamma)|-|V(\Gamma)|+1.
\]

\noindent A \textbf{divisor} is an element of the free abelian group on the vertices of $\Gamma$:
\[
\textnormal{Div}(\Gamma) := \left\{\sum_{v\in V(\Gamma)} n_vv: n_v\in \ZZ\right\}.
\]

The \textbf{degree} of a divisor $D = \sum_v n_v v$ is $deg(D) = \sum_v n_v$. The divisor $D$ is said to be \textbf{effective} if $n_v\geq 0$ for all $v$. 

It is often helpful to visualize divisors on a graph $\Gamma$ as a configuration of ``chips'' and ``anti-chips'' placed on at vertices. Let $D$ be any divisor on $\Gamma$. A \textbf{chip firing move at $v$} produces a new divisor $D'$ as follows. Reduce the number of chips at $v$ by $deg(v)$, and add one chip to every vertex of $\Gamma$ adjacent to $v$. Note that the process of firing $v$ may be inverted by firing every vertex in the complement of $v$. Observe that a chip firing move does not change the degree of a divisor. 

\begin{definition}
Let $D$ and $D'$ be two divisors on a graph $\Gamma$. $D$ and $D'$ are said to be \textbf{linearly equivalent} if $D'$ can be obtained from $D$ by a finite sequence of chip firing moves at vertices of $\Gamma$. 
\end{definition}

For an alternate definition of this equivalence in terms of piecewise linear functions, see~\cite{Cap11}.

As with the divisor theory of algebraic curves, there is a natural notion of rank for divisors on graphs. For a divisor $D$, we denote by $|D|$ the set of \textbf{effective} divisors linearly equivalent to $D$. 

\begin{definition}
The \textbf{rank} of $D$, denoted $r(D)$, is defined as follows. If $|D| = \emptyset$, then $r(D) := -1$. Otherwise $r(D)$ is the largest integer $r$, such that $|D-E|$ is nonempty for all effective divisors $E$ of degree $r$. 
\end{definition}


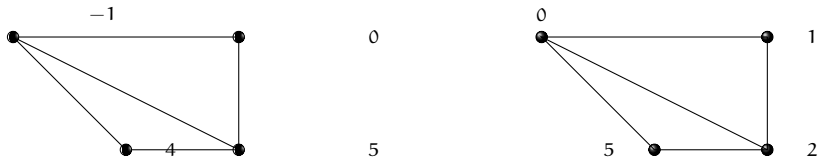
\begin{figure}
\begin{minipage}[b]{0.45\linewidth}
\begin{tikzpicture}[scale=1.5]
\hspace{12mm}
\begin{scope}[rotate=90]
\coordinate (0) at (0,0);
\coordinate (1) at (1,1);
\coordinate (2) at (1,-1);
\coordinate (3) at (0,-1);

\node (A) at (0,0.4) {\tiny$4$};
\node (B) at (1.2,1) { \tiny$ -1$};
\node (C) at (0,-1.4) { \tiny$ 5$};
\node (D) at (1,-1.4) {\tiny$0$};

\draw (0)--(1)--(3);\draw (0)--(3);\draw (3)--(2);\draw (2)--(1);

\draw[ball color=black] (0) circle (0.5mm);
\draw[ball color=black] (1) circle (0.5mm);
\draw[ball color=black] (2) circle (0.5mm);
\draw[ball color=black] (3) circle (0.5mm);
\end{scope}
\end{tikzpicture}

\end{minipage}
\quad
\begin{minipage}[b]{0.45\linewidth}
\hspace{8mm}
\begin{tikzpicture}[scale=1.5]
\begin{scope}[rotate=90]
\coordinate (0) at (0,0);
\coordinate (1) at (1,1);
\coordinate (2) at (1,-1);
\coordinate (3) at (0,-1);

\node (A) at (0,0.4) {\tiny$5$};
\node (B) at (1.2,1) { \tiny$ 0$};
\node (C) at (0,-1.4) { \tiny$ 2$};
\node (D) at (1,-1.4) {\tiny$1$};

\draw[ball color=black] (0) circle (0.5mm);
\draw[ball color=black] (1) circle (0.5mm);
\draw[ball color=black] (2) circle (0.5mm);
\draw[ball color=black] (3) circle (0.5mm);

\draw (0)--(1)--(3);\draw (0)--(3);\draw (3)--(2);\draw (2)--(1);
\end{scope}
\end{tikzpicture}
\end{minipage}
\caption{Chip firing. The divisor on the right is obtained by firing bottom right vertex on the left graph.}
\label{fig: chip-firing}
\end{figure}

\subsection{Brill--Noether Theory}\label{subset: BN-theory}
Let $C$ be a smooth projective algebraic curve. Brill--Noether theory of algebraic curves is concerned with the geometry of the scheme $W^r_d(C)$, parametrizing linear equivalence classes of divisors of degree $d$ moving in a linear system of rank at least $r$ on $C$. Brill--Noether theory of graphs seeks to develop an analogous combinatorial theory. One aspect of this is the existence of effective divisors of degree $d$ which have rank at least $r$ for positive integers $d$ and $r$.

Given a genus $g$ graph $\Gamma$ the \textbf{Brill--Noether number}, denoted $\rho^r_d(g)$, is defined to be
\[
\rho^r_d(g) := g-(r+1)(g-d+r).
\]
For an algebraic curve $C$, $\rho^r_d(g)$ is the naive expected dimension of $W^r_d(C)$. The classical \textbf{Brill--Noether theorem} states that for $\rho^r_d(g)<0$, a general curve of genus $g$ has no divisors of degree $d$ and rank at least $r$~\cite{GH80}. In~\cite{CDPR}, Cools, Draisma, Payne and Robeva prove a ``tropical Brill--Noether theorem'' which is then used to deduce the classical result. This tropical Brill--Noether theorem asserts the existence of a metric graph $\Gamma_g$ in each genus $g$ such that $\Gamma_g$ has no divisors of degree $d$ and rank at least $r$ when $\rho^r_d(g)$ is negative.
With this in mind, a genus $g$ graph $\Gamma$ is said to be \textbf{Brill--Noether general} if for all $r,d\geq0$ such that $\rho^r_d(g)$ is negative, $\Gamma$ has no divisors of degree $d$ and rank at least $r$. A graph is called \textbf{special} if it is not general.

\section{Proof of Theorems}\label{sec: proof-of-theorems}
\subsection{Maximally Symmetric Multigraphs with Loops}\label{sec: loops} All graphs in this subsection are trivalent multigraphs, possibly with loop edges. These are precisely the dual graphs of stable curves in the zero-stratum of $\Mbar_g$. In~\cite{OV06}, van Opstall and Veliche demonstrate explicit bounds for the automorphism group of a genus $g$ trivalent multigraph with loops together with a graph $C_g$ which achieves this bound. We now review the construction.\\

For $n\geq 3$ we construct trees $T_n$ and the genus $g$ graphs $C_g$ using the following algorithms given in \cite[Definition 3.1, 3.3]{OV06}.

\begin{definition}\label{def: Tn}
For each $n\geq 3$, $T_n$ is defined as follows:
\begin{enumerate}[(R1)]
\item Place $n$ vertices in a row. Call this level one.
\item Assume that level $k$ is formed, and contains vertices $v_1,\ldots, v_m$. If $m$ is even, form level $k+1$ by adding vertices labeled as follows: 
\[
v_{\{1,2\}},v_{\{3,4\}},\ldots,v_{\{m-1,m\}}.
\]
If $m$ is odd, form level $k+1$ by adding vertices labeled as follows:
\[
v_{\{1,2\}},v_{\{3,4\}},\ldots,v_{\{m-2,m-1\}}. 
\]
That is, level $k+1$ contains precisely $\lfloor \frac{m}{2}\rfloor$ vertices. Connect each vertex $v_{\{r,r+1\}}$ in level $k+1$ to the vertices $v_r$ and $v_{r+1}$ in level $k$.
\item Call a vertex \textbf{unpaired} if it is either $2$-valent or $0$-valent.
\item If at some stage there are exactly two unpaired vertices, connect them with an edge.
\item If at some stage there are exactly three unpaired vertices, connect all three by edges to a new vertex.
\item If there is an unpaired vertex in level $k$ and one in some level $\ell<k$, place a vertex in level $k+1$, and connect these two vertices to it. Note that this rule does not apply if there are \textbf{precisely} two unpaired vertices.
\end{enumerate}
Note that if both rules (R5) and (R6) apply, there is no conflict, as the resulting graphs are identical. The process terminates when the graph is connected with $n$ leaves, trivalent away from the leaves. 
\end{definition}

\begin{example}
\textnormal{
To illustrate this algorithm, we construct $T_5$. .
\begin{itemize}
\item Begin with vertices $v_1,\ldots, v_5$ at level one.
\item Form level two by $v_{\{1,2\}}$ and $v_{\{3,4\}}$. Connect $v_{\{i,j\}}$ to $v_i$ and $v_j$.
\item Level two is now formed. There are precisely three vertices remaining, namely $v_5$, $v_{\{1,2\}}$ and $v_{\{3,4\}}$. Following (R5) above, introduce a new vertex $v_0$ and connect it to each of these. The resulting graph is shown in Figure~\ref{fig: Tn}. Note that in place of (R5), one may also perform (R6) and then (R4), to obtain the same result.
\end{itemize}
}
\end{example}

\begin{figure}
\centering
\begin{minipage}[b]{0.45\linewidth}
\hspace{13mm}
\begin{tikzpicture}
[scale=1.5]
\begin{scope}[rotate=90]
\coordinate (0) at (0,0);
\coordinate (1) at (-0.5,-0.5);
\coordinate (2) at (0.5,-0.5);
\coordinate (3) at (-0.75,-1);
\coordinate (4) at (-0.25,-1);
\coordinate (5) at (0.25,-1);
\coordinate (6) at (0.75,-1);
\coordinate (7) at (0,0.7);

\draw[ball color=black] (0) circle (0.3mm);
\draw[ball color=black] (1) circle (0.3mm);
\draw[ball color=black] (2) circle (0.3mm);
\draw[ball color=black] (3) circle (0.55mm);
\draw[ball color=black] (4) circle (0.55mm);
\draw[ball color=black] (5) circle (0.55mm);
\draw[ball color=black] (6) circle (0.55mm);
\draw[ball color=black] (7) circle (0.55mm);
\draw (0)--(7);
\draw (0)--(1); \draw (0)--(2); \draw (1)--(3);\draw (1)--(4);\draw (2)--(6);\draw (2)--(5);

\end{scope}
\end{tikzpicture}

\caption*{$T_5$.}

\end{minipage}
\quad
\begin{minipage}[b]{0.45\linewidth}
\centering
\begin{tikzpicture}
[scale=1.5]
\begin{scope}[rotate=90]
\coordinate (0) at (0,0);
\coordinate (1) at (-0.5,-0.5);
\coordinate (2) at (0.5,-0.5);
\coordinate (3) at (-0.75,-1);
\coordinate (4) at (-0.25,-1);
\coordinate (5) at (0.25,-1);
\coordinate (6) at (0.75,-1);
\coordinate (7) at (0,0.7);
\coordinate (8) at (0.25,1.2);
\coordinate (9) at (-0.25,1.2);

\draw[ball color=black] (0) circle (0.3mm);
\draw[ball color=black] (1) circle (0.3mm);
\draw[ball color=black] (2) circle (0.3mm);
\draw[ball color=black] (3) circle (0.55mm);
\draw[ball color=black] (4) circle (0.55mm);
\draw[ball color=black] (5) circle (0.55mm);
\draw[ball color=black] (6) circle (0.55mm);
\draw[ball color=black] (7) circle (0.3mm);
\draw[ball color=black] (8) circle (0.55mm);
\draw[ball color=black] (9) circle (0.55mm);
\draw (0)--(7);
\draw (0)--(1); \draw (0)--(2); \draw (1)--(3);\draw (1)--(4);\draw (2)--(6);\draw (2)--(5);
\draw (9)--(7)--(8);
\end{scope}
\end{tikzpicture}

\caption*{$T_6$.}
\end{minipage}
\caption{The trees $T_5$ and $T_6$. In each case, the larger vertices correspond to the level one vertices.}
\label{fig: Tn}

\end{figure}

\begin{definition}\label{def: Cn}
For each $g\geq 3$, the graph $C_g$ is defined as follows:
\begin{itemize}
\item If $g\neq 3\cdot 2^m+1$, for $m>0$ and $g \neq 3(2^m+2^p)$ for $m>p\geq 0$, form $C_g$ from $T_g$ by placing a loop on each leaf. 
\item If $g = 3\cdot 2^m+1$, then form $C_{g}$ by following the construction for $T_{g-1}$, but at the last step join the last three vertices in a triangle, then placing a loop on each leaf.
\item If $g = 3(2^m+2^p)$ for $p\geq 0$ and $m>p+1$, the graph $C_g$ is constructed as follows. Let $B$ be the line graph on $3$ vertices. To one leaf of $B$, attach a copy of $T_{2^m}$, and to the other leaf, a copy of $T_{2^p}$. The resulting graph has a unique $2$-valent vertex, namely that of $B$. Take three copies of this graph and attach them to the edges of a $3$-star. The graphs are attached to the $3$-star at their unique $2$-valent vertices. Around each of the $3(2^m+2^p)$ leaves of this graph, place a loop. This is $C_g$.
\end{itemize}
\end{definition}

\noindent The following proposition is a direct consequence of Proposition~3.4 and of the Main Theorem of~\cite{OV06}.

\begin{proposition}
Let $g\geq 3$. The graph $C_g$ is a maximally symmetric genus $g$ trivalent multigraph with loops.
\end{proposition}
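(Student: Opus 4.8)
The plan is to separate the two claims packaged into the statement and dispatch them in turn: that $C_g$ is a genus $g$ trivalent multigraph with loops, which is a direct combinatorial verification, and that its automorphism group is as large as possible, which is exactly what is deferred to \cite{OV06}. First I would check the numerical conditions case by case from Definition~\ref{def: Cn}. Each tree $T_n$ is trivalent away from its $n$ level-one leaves, and every leaf is $1$-valent; placing a loop on a leaf raises its valence by $2$, making it trivalent, and raises the first Betti number by $1$, since it adds one edge without adding a vertex. Hence in the generic case $C_g$ is $T_g$ with a loop on each of its $g$ leaves, so $g(C_g)=g$. In the case $g=3\cdot 2^m+1$, replacing the terminal $3$-star of $T_{g-1}$ by a triangle introduces exactly one independent cycle, and the $g-1$ loops contribute $g-1$, for a total genus of $g$. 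In the case $g=3(2^m+2^p)$, the $3$-star together with the three attached copies of $B$, $T_{2^m}$, and $T_{2^p}$ is a tree with precisely $3(2^m+2^p)$ leaves, and the loops contribute $3(2^m+2^p)=g$. Trivalence at the internal vertices, including the centers of the stars and the vertices of the triangle, is immediate from the construction in every case.

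It remains to establish maximality of $|\Aut(C_g)|$, and here I would appeal to the two cited results rather than reprove them. The Main Theorem of \cite{OV06} provides the sharp upper bound for the order of the automorphism group of a genus $g$ trivalent multigraph with loops, while \cite[Proposition~3.4]{OV06} computes $|\Aut(C_g)|$ and identifies it with that bound. The structural reason this computation is clean is that any automorphism of $C_g$ must carry loop edges to loop edges, hence restricts to an automorphism of the underlying skeleton, while each loop independently contributes an involution swapping its two half-edges; thus $\Aut(C_g)$ is an extension of the symmetry group of the skeleton by a product of copies of $\ZZ/2$ indexed by the leaves, and its order is exactly what Proposition~3.4 records.

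I expect the only genuine obstacle to be the bookkeeping required to align the three clauses of Definition~\ref{def: Cn} with the cases distinguished in \cite{OV06}, so that the bound and the order computation are applied to the same genus. The exceptional genera $3\cdot 2^m+1$ and $3(2^m+2^p)$ are precisely those in which the balanced tree $T_g$ no longer maximizes the symmetry, and a triangle, respectively a symmetric triple of subtrees attached to a $3$-star, does better. Once this dictionary is fixed, the proposition follows by combining the upper bound of the Main Theorem with the equality recorded in \cite[Proposition~3.4]{OV06}, with no further estimation needed.
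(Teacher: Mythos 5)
Your proposal is correct and takes essentially the same route as the paper: the paper gives no argument beyond observing that the proposition ``is a direct consequence of Proposition~3.4 and of the Main Theorem of~\cite{OV06},'' which is precisely the citation you invoke for maximality of the automorphism group. Your additional direct verification of the genus count and trivalence in each case of Definition~\ref{def: Cn} is accurate (and slightly more than the paper records), so nothing needs correction.
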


\begin{figure}[h!]
\small
\newsavebox{\tempbox}
\sbox{\tempbox}{
\begin{tikzpicture}
[scale=1.2]
\coordinate (0) at (0,0);
\coordinate (1) at (-0.5,-0.5);
\coordinate (2) at (0.5,-0.5);
\coordinate (3) at (-0.75,-1);
\coordinate (4) at (-0.25,-1);
\coordinate (5) at (0.25,-1);
\coordinate (6) at (0.75,-1);

\draw[ball color=black] (1) circle (0.3mm);
\draw[ball color=black] (2) circle (0.3mm);
\draw[ball color=black] (3) circle (0.3mm);
\draw[ball color=black] (4) circle (0.3mm);
\draw[ball color=black] (5) circle (0.3mm);
\draw[ball color=black] (6) circle (0.3mm);
\draw (1)--(3);\draw (1)--(4);\draw (2)--(6);\draw (2)--(5);
\draw (-0.75, -1.15) circle (0.15); \draw (-0.25, -1.15) circle (0.15);
\draw (0.25,-1.15) circle (0.15); \draw (0.75,-1.15) circle (0.15);
\draw (1)--(2);
\end{tikzpicture}
}
\newlength{\tempboxlength}
\settowidth{\tempboxlength}{\usebox{\tempbox}}
\subfloat[ $C_4$]{
	\usebox{\tempbox}
}
\makebox[\tempboxlength]{
\hspace{1mm}
\subfloat[$C_5$]{
\begin{tikzpicture}
[scale=1.2]
\begin{scope}[rotate=90]
\coordinate (0) at (0,0);
\coordinate (1) at (-0.5,-0.5);
\coordinate (2) at (0.5,-0.5);
\coordinate (3) at (-0.75,-1);
\coordinate (4) at (-0.25,-1);
\coordinate (5) at (0.25,-1);
\coordinate (6) at (0.75,-1);
\coordinate (7) at (0,0.7);

\draw[ball color=black] (0) circle (0.3mm);
\draw[ball color=black] (1) circle (0.3mm);
\draw[ball color=black] (2) circle (0.3mm);
\draw[ball color=black] (3) circle (0.3mm);
\draw[ball color=black] (4) circle (0.3mm);
\draw[ball color=black] (5) circle (0.3mm);
\draw[ball color=black] (6) circle (0.3mm);
\draw[ball color=black] (7) circle (0.3mm);
\draw (0)--(7); \draw (0,0.85) circle (0.15);
\draw (0)--(1); \draw (0)--(2); \draw (1)--(3);\draw (1)--(4);\draw (2)--(6);\draw (2)--(5);
\draw (-0.75, -1.15) circle (0.15); \draw (-0.25, -1.15) circle (0.15);
\draw (0.25,-1.15) circle (0.15); \draw (0.75,-1.15) circle (0.15);
\end{scope}
\end{tikzpicture}
}
\hfill
}
\hspace{5mm}
\subfloat[$C_7$]{
\begin{tikzpicture}
[scale = 2]
\begin{scope}[rotate=90]
\coordinate (0) at (0,0.25);
\coordinate (1) at (-0.15,0);
\coordinate (2) at (0.15,0);
\coordinate (3) at (0,0.45);
\coordinate (4) at (-0.3,-0.15);
\coordinate (5) at (0.3,-0.15);
\coordinate (6) at (-0.125,0.6); \coordinate (7) at (0.125,0.6);
\coordinate (8) at (-0.3,-0.35);
\coordinate (9) at (-0.49,-0.15);
\coordinate (10) at (0.49,-0.15);
\coordinate (11) at (0.3,-0.35);

\draw[ball color=black] (0) circle (0.2mm);
\draw[ball color=black] (1) circle (0.2mm);
\draw[ball color=black] (2) circle (0.2mm);
\draw[ball color=black] (3) circle (0.2mm);
\draw[ball color=black] (4) circle (0.2mm);
\draw[ball color=black] (5) circle (0.2mm);
\draw[ball color=black] (6) circle (0.2mm);
\draw[ball color=black] (7) circle (0.2mm);
\draw[ball color=black] (8) circle (0.2mm);
\draw[ball color=black] (9) circle (0.2mm);
\draw[ball color=black] (10) circle (0.2mm);
\draw[ball color=black] (11) circle (0.2mm);
\draw (0)--(1)--(2)--(0);\draw (0)--(3);\draw (1)--(4);\draw (2)--(5);\draw (6)--(3)--(7);
\draw (8)--(4)--(9); \draw (10)--(5)--(11);
\draw(-0.125,0.69) circle (0.09); \draw(0.125,0.69) circle (0.09);
\draw (-0.3,-0.44) circle (0.09); \draw (-0.58, -0.15) circle (0.09);
\draw (0.3,-0.44) circle (0.09); \draw (0.58, -0.15) circle (0.09);
\end{scope}
\end{tikzpicture}

\hfill
}

\caption{\small{The hyperelliptic graphs $C_4$,$C_5$ and $C_7$. The hyperelliptic involution on $C_4$ and $C_5$ flips each loop. $C_7$ is not hyperelliptic}}
\label{fig: Cn}
\end{figure}
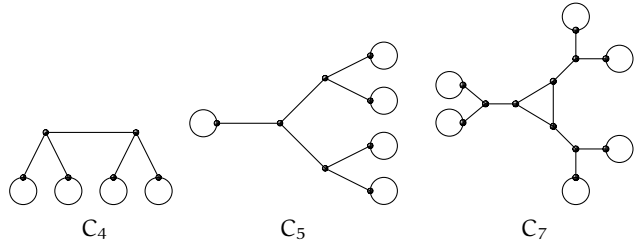

\begin{definition}
A graph $\Gamma$ is called hyperelliptic it has a divisor of rank $1$ and degree $2$.
\end{definition}

Similarly, an algebraic curve is \textbf{hyperelliptic} if it has a divisor of rank $1$ and degree $2$. An algebraic curve is hyperelliptic if and only if it admits an involution $\iota$ whose quotient is a genus $0$ curve. In similar spirit, Baker and Norine show in~\cite[Theorem 5.2]{BN09} that a loop-free graph $\Gamma$ is hyperelliptic if and only if there exists an involution $\iota: \Gamma\to \Gamma$, such that $\Gamma/\iota$ is a tree. If $\iota$ exists, it is necessarily unique, and we refer to it as the \textbf{hyperelliptic involution.} The divisor of rank $1$ and degree $2$ is unique up to linear equivalence, and is referred to as the \textbf{hyperelliptic divisor}. It was shown by Caporaso~\cite[Lemma~4.1]{Cap12} that any graph $\Gamma$ is hyperelliptic if and only if that graph $\Gamma^0$, obtained by subdividing every loop edge, is hyperelliptic.

For $g$ at least $3$, hyperelliptic graphs are not Brill--Noether general, since $\rho^1_2(g)$ is negative. We will use this fact repeatedly in the forthcoming proofs.\\

\noindent \textit{Proof of Theorem~\ref{thm: loops}.}
We see immediately from the construction above that if $g \neq 3\cdot 2^m+1$, the graph $C_g$ is formed by attaching loops to the leaves of a tree. Form $C_g^0$ by inserting a $2$-valent vertex in the center of every loop. The automorphism that interchanges the edges of this subdivided loop is a hyperelliptic involution, so $C^0_g$ is hyperelliptic. By the result of Caporaso discussed above, $C_g$ is also hyperelliptic and consequently not Brill--Noether general for $g\geq 3$.

Now consider the case where $g =3\cdot 2^m+1$. The graph $C_g$ is formed from a triangle by attaching to each vertex (via an edge) hyperelliptic subgraphs of genus $2^m$, as in Definition~\ref{def: Cn}. Consider the divisor $D=3v$ where $v$ is a vertex of the central triangle. We want to show that $D$ is a special divisor degree $3$ and rank $1$.

To see that $D$ has rank $1$, we may assume that $D$ is zero away from the vertices of the central triangle by using a sequence of chip firing moves to move any nonzero weight onto the triangle. This is a consequence of the following standard argument: suppose $e$ is a bridge edge\footnote{A bridge edge of a connected graph $\Gamma$ is an edge $e$ such that if $e$ is deleted, $\Gamma$ becomes disconnected.} with vertices $v$ and $v'$. Then firing every vertex in the connected component of $\Gamma\setminus e$ that contains $v$ moves $v$ to $v'$, so $v\sim v'$. It follows that any two vertices connected by a bridge edge are equivalent. The problem is now reduced the case of a divisor $3w-w'$ where $w,w'$ are vertices of the central triangle. Since $\Gamma$ is trivalent, firing $w$ yields an effective divisor. Since $\rho^1_3(g)<0$, $D$ is a special divisor and $C_g$ is not Brill--Noether general.
\qed

\begin{remark}\textnormal{
Let $D$ be a special divisor on a graph $\Gamma$. We may ask if $D$ is the specialization of a divisor on an algebraic curve in the sense of Baker~\cite{B08}. Loosely speaking, we ask whether $\Gamma$ is Brill--Noether special for algebraic reasons rather than combinatorial ones.
When $g\neq 3\cdot 2^m+1$, the hyperelliptic divisor is not the specialization of such a divisor on an algebraic curve. This follows from the fact that the hyperelliptic involution on this graph cannot be lifted to a hyperelliptic involution on an algebraic curve. See~\cite[Corollary 4.15]{ABBR}.}
\end{remark}

\subsection{Maximally Symmetric Mulitgraphs without Loops}\label{multi}
All graphs in this subsection are trivalent with no loop edges, but may have multiple edges. We first describe graphs $C'_g$ for all genera, following~\cite[Definition~3.5]{OV06}. By a \textbf{cone} we mean a triangle with one edge doubled. Note that a cone has genus 2.

The construction again uses as an ingredient the graphs $T_n$ from Definition~\ref{def: Tn}.

\begin{definition}For each $g\geq 3$, the graph $C_g'$ is defined as follows:
\begin{itemize}
\item If $g = 3\cdot 2^m$, $m>0$, $C'_g$ is formed by three copies of the graph$T_{2^{m-1}}$, connected in a 3-star. A cone is placed on each leaf. 
\item If $g = 3\cdot 2^m+1$, $m>0$, $C'_g$ is formed from $C'_{g-1}$ by expanding the central vertex to a triangle.
\item If $g = 3\cdot 2^m+2$,  $C'_g$ is formed from $C'_{g-2}$ by expanding the central vertex to a $K_{2,3}$\footnote{Recall that $K_{2,3}$ is the complete bipartite graph. It has 2 trivalent vertices and 3 bivalent vertices. The trees are attached to these bivalent vertices.}.
\item If $g = 3\cdot(2^m+1)$, $m>1$, $C'_g$ is formed from $C'_{g-3}$ by adding double edges in the middle of the edges of the star.

\item If $g = 3(2^m+2^p)$ for $p> 0$ and $m>p+1$, the graph $C'_g$ is constructed as follows. Let $B$ be the line graph on $3$ vertices. To one leaf of $B$, attach a copy of $T_{2^{m-1}}$, and to the other leaf, a copy of $T_{2^{p-1}}$. The resulting graph has a unique $2$-valent vertex, namely that of $B$. Take three copies of this graph and attach them to the edges of a $3$-star. The graphs are attached to the $3$-star at their unique $2$-valent vertices. At each of the $3(2^m+2^p)$ leaves of this graph, place a cone. This is $C'_g$.

\item If $g = 3(2^m+2^p)$, $m>p+1$ and $p>0$, insert double edges to the previous case in the edges emanating from the star.
\item Otherwise if $g$ is even, form $C_{g/2}$ as in Section~\ref{sec: loops}, replacing loops by cones.
\item If $g$ is odd, then the last step of the construction of $T_{\lfloor g/2 \rfloor}$ (the tree in Section~\ref{sec: loops}) ends by connecting $2$ vertices by an edge or $3$ by a star. In the former case, add a double edge in the middle of this edge. In the latter case, the subgraphs on one of the 3 branches of the star is not isomorphic to the other two. Add a double edge on this branch. This is $C'_g$.
\end{itemize}
\end{definition}

\begin{figure}[h!]
\centering
\begin{minipage}[b]{0.45\linewidth}
\hspace{13mm}
\begin{tikzpicture}[scale = 1]
\begin{scope}
\coordinate (0) at (0,0);
\coordinate (1) at (1,0);
\coordinate (2) at (-1,0);
\coordinate (3) at (0,1);
\coordinate (4) at (-1.2,-0.6);
\coordinate (5) at (-0.8,-0.6);
\coordinate (6) at (1.2,-0.6);
\coordinate (7) at (0.8,-0.6);
\coordinate (8) at (0.2, 1.6);
\coordinate (9) at (-0.2, 1.6);

\draw[ball color=black] (0) circle (0.5mm);
\draw[ball color=black] (1) circle (0.5mm);
\draw[ball color=black] (2) circle (0.5mm);
\draw[ball color=black] (3) circle (0.5mm);
\draw[ball color=black] (4) circle (0.5mm);
\draw[ball color=black] (5) circle (0.5mm);
\draw[ball color=black] (6) circle (0.5mm);
\draw[ball color=black] (7) circle (0.5mm);
\draw[ball color=black] (8) circle (0.5mm);
\draw[ball color=black] (9) circle (0.5mm);

\draw (1)--(0)--(2); \draw (0)--(3);
\path (4) edge [bend left] (5);
\path (4) edge [bend right] (5);
\draw (4)--(2)--(5);
\path (6) edge [bend left] (7);
\path (6) edge [bend right] (7);
\draw (6)--(1)--(7);
\path (8) edge [bend left] (9);
\path (8) edge [bend right] (9);
\draw (9)--(3)--(8);
\end{scope}
\end{tikzpicture}
\hspace{5mm}
\caption*{The multigraph $C'_6$}

\end{minipage}
\quad
\begin{minipage}[b]{0.45\linewidth}
\hspace{14mm}
\begin{tikzpicture}
\coordinate (0) at (-1,0);
\coordinate (1) at (-0.5, 0.5);
\coordinate (2) at (0.5, 0.5);
\coordinate (3) at (1,0);
\coordinate (4) at (0.5, -0.5);
\coordinate (5) at (-0.5, -0.5);

\draw[ball color=black] (0) circle (0.5mm);
\draw[ball color=black] (1) circle (0.5mm);
\draw[ball color=black] (2) circle (0.5mm);
\draw[ball color=black] (3) circle (0.5mm);
\draw[ball color=black] (4) circle (0.5mm);
\draw[ball color=black] (5) circle (0.5mm);

\draw (0)--(1);\path (1) edge [bend left] (2); \path (1) edge [bend right] (2);
\draw (2)--(3);\path (3) edge [bend left] (4); \path (3) edge [bend right] (4);
\draw (4)--(5);\path (5) edge [bend left] (0); \path (0) edge [bend left] (5);
\end{tikzpicture}

\caption*{The graph $\tilde C_4$}
\end{minipage}
\caption{While the graph on the left is hyperelliptic, the graph on the right is not.}
\label{fig: C'6-multigraph}

\end{figure}

\noindent The following proposition is a direct consequence of Proposition~3.6 and Main Theorem of~\cite{OV06}.

\begin{proposition}
Let $g\geq 10$. The graph $C_g'$ is a maximally symmetric genus $g$ trivalent multigraph without loops.
\end{proposition}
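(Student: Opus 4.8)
The plan is to reduce the statement to the two external inputs from~\cite{OV06}: the Main Theorem, which supplies the exact maximal value, call it $N(g)$, of $|\Aut(\Gamma)|$ taken over all genus $g$ trivalent loopless multigraphs $\Gamma$, and Proposition~3.6, which computes the automorphism group of the candidate $C_g'$. Granting these, the proposition is immediate: one checks that $C_g'$ is a genus $g$ trivalent loopless multigraph and that $|\Aut(C_g')| = N(g)$. So the real content of a self-contained argument lies in establishing those two facts, and I would organize it as follows.

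First I would verify the combinatorial type of $C_g'$ case by case along the construction. In each clause of the definition one records the number of leaves of the attached trees, the number of cones or doubled edges, and the vertices and edges of the central gadget (a single vertex, a triangle, a $K_{2,3}$, or a $3$-star), and then confirms via $g = |E| - |V| + 1$ that the first Betti number is $g$, that every vertex is trivalent, and that no loops occur. Here the point is that cones and doubled edges contribute genus \emph{without} introducing loops, which is exactly why they replace the loops of Section~\ref{sec: loops}. This is routine bookkeeping, but it must be carried out separately for each residue and binary-expansion class of $g$ appearing in the definition.

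The heart of the matter is computing $\Aut(C_g')$ and matching it to the extremal bound. Here the heuristic ``as close to a tree as possible'' is made precise: the automorphism group decomposes as a product over a rigid skeleton. The central $3$-star contributes an $S_3$ (or, when it is expanded to a triangle or a $K_{2,3}$, the corresponding symmetry group); each cone or doubled edge contributes its own local symmetries (swapping a pair of parallel edges together with any reflection it admits); and each balanced binary tree $T_{2^k}$ contributes its full automorphism group fixing the root, namely the $k$-fold iterated wreath product of $\ZZ/2$, of order $2^{2^k-1}$. Since the trees appearing in $C_g'$ are the powers-of-two trees $T_{2^{m-1}}$ and $T_{2^{p-1}}$, these factors are explicit, and $|\Aut(C_g')|$ is their product. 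One then checks that this product equals $N(g)$.

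The main obstacle is twofold. First, one must rule out unexpected automorphisms: every automorphism has to be shown to preserve the skeleton (fixing the central gadget setwise and permuting the tree-with-appendage branches), so that $\Aut(C_g')$ is \emph{exactly} the product above and no larger. Second, there is the optimization showing that, among all admissible genus $g$ configurations of trees with appendages, this particular balancing of the $T_n$'s attains the maximum $N(g)$ rather than merely a lower bound. Both of these are precisely the work done in~\cite{OV06}, so in practice I would invoke the Main Theorem and Proposition~3.6 directly; the role of the present proposition is only to repackage their conclusions in the form needed for the subsequent Brill--Noether analysis.
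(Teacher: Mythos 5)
Your proposal is correct and matches the paper exactly: the paper offers no independent proof, stating only that the proposition ``is a direct consequence of Proposition~3.6 and Main Theorem of~\cite{OV06},'' which is precisely the reduction you make. Your additional sketch of what a self-contained verification would require (genus/valence bookkeeping plus the automorphism-group computation and extremality) is reasonable but, as you note, is exactly the content of~\cite{OV06} and need not be reproduced.
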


\subsubsection{Low Genus Cases} We first analyze Caporaso's conjecture for multigraphs without loops in genus smaller than $10$. The following results are obtained by explicitly determining the divisor classes of prescribed rank and degree on a given graph using E. Robeva's program\footnote{This program efficiently computes the number of degree $d$ rank $r$ divisors on a given simple graph. The pseudocode can be found at \url{http://math.berkeley.edu/~erobeva/about.html}}.

We denote by $\tilde C_g$, the genus $g$ ``loop of loops'' graph. Namely, the graph obtained from the $2(g-1)$-gon by doubling every other edge.
\begin{itemize}
\item In genus $3$ the unique maximally symmetric multigraph is in fact simple. It is the tetrahedron, and is Brill--Noether general.
\item In genus $4$ the unique maximally symmetric multigraph is also simple. It is the complete bipartite graph$K_{3,3}$, and is Brill--Noether general.
\item In genus $5$, the unique maximally symmetric multigraph is $\tilde C_5$. This graph is Brill--Noether general. 
\item In genus $6$ and $8$, the unique maximally symmetric multigraphs are $C'_6$ and $C'_8$ respectively. As we will see, these graphs are hyperelliptic and hence special.
\item In genus $7$, the unique maximally symmetric multigraph is $\tilde C_7$. This graph has an effective divisor of degree $4$ and rank $1$, and hence is special. The special divisor is $D = 2v+2w$, where $v$ and $w$ are any two vertices connected by a doubled edge.
\item In genus $9$, the maximally symmetric multigraphs are $C'_9$ and $\tilde C_9$. These graphs are both Brill--Noether special. The former is hyperelliptic, and the latter has a divisor of degree $4$ and rank $1$. Again, the special divisor is $D = 2v+2w$ where $v$ and $w$ are connected by a doubled edge.
\end{itemize}

\subsubsection{Higher Genus Cases: Proof of Theorem~\ref{thm: multiple-edges}}
If $g \neq 3\cdot 2^m+1$, $C'_g$ is hyperelliptic. The hyperelliptic involution reflects all cones, and interchanges the edges on all double edges. If the graph contains a $K_{2,3}$ subgraph, the hyperelliptic involution is as depicted in Figure~\ref{fig: K23}.

If $g = 3\cdot 2^m+1$, $C'_g$ has a divisor of degree $3$ and rank $1$. The argument is nearly identical to that used for multigraphs with loops in the previous section. The relevant divisor in this case is $3v,$ where $v$ is the top vertex of any cone.  \qed

\begin{figure}
\centering
\begin{minipage}[b]{0.45\linewidth}
\hspace{13mm}
\begin{tikzpicture}
\coordinate (0) at (-1,0);
\coordinate (1) at (1,0);
\coordinate (2) at (0,0.33);
\coordinate (3) at (0,0.67);
\coordinate (4) at (0,1);

\node (N0) at (-1,-0.25) {$v$};\node (N1) at (1,-0.25) {$w$};

\draw[ball color=black] (0) circle (0.5mm);
\draw[ball color=black] (1) circle (0.5mm);
\draw[ball color=black] (2) circle (0.5mm);
\draw[ball color=black] (3) circle (0.5mm);
\draw[ball color=black] (4) circle (0.5mm);

\draw (0)--(2)--(1); \draw (0)--(3)--(1);\draw (0)--(4)--(1);

\end{tikzpicture}

\caption*{$K_{2,3}$.}

\end{minipage}
\quad
\begin{minipage}[b]{0.45\linewidth}
\centering

\begin{tikzpicture}
\coordinate (0) at (-1,0);
\coordinate (1) at (1,0);
\coordinate (2) at (0,1);
\node (N0) at (-1,-0.25) {$v$};\node (N1) at (1,-0.25) {$w$};

\draw[ball color=black] (0) circle (0.5mm);
\draw[ball color=black] (1) circle (0.5mm);
\draw[ball color=black] (2) circle (0.5mm);

\draw (0)--(2)--(1);
\path (0) edge [bend left] (1);
\path (0) edge [bend right] (1);

\end{tikzpicture}
\caption*{Cone.}
\end{minipage}
\caption{In each case the hyperelliptic involution interchanges the vertices $v$ and $w$, while fixing all other vertices.}
\label{fig: K23}

\end{figure}

\subsection{Maximally Symmetric Simple Graphs}\label{simple}
All graphs in this subsection are simple and trivalent. We begin with an analysis of the conjecture for low genus. 

\subsubsection{Low Genus Cases}
The following are the maximally symmetric trivalent graphs in each genus from~\cite{OV10} Remark 4.3.\footnote{In the $g=7$ case, the optimal graph was produced via an exhaustive search of all trivalent graphs in this genus using data from Gordon Royle available at \url{http://school.maths.uwa.edu.au/~gordon/data.html}.}
\begin{itemize}
\item In genus $3$, the unique maximally symmetric graph is the tetrahedron. It is Brill--Noether general.
\item In genus $4$, the unique maximally symmetric graph is $K_{3,3}$, the complete bipartite graph. It is Brill--Noether general.
\item In genus $5$, the unique maximally symmetric graph is the cube. It is Brill--Noether general.
\item In genus $6$, the unique maximally symmetric graph is the Petersen graph. It is Brill--Noether general.
\item In genus $7$, the unique maximally symmetric graph is depicted in Figure~\ref{fig: genus-7-simple}. The divisor depicted in this figure has degree $4$ and rank $1$, and the graph is hence special. 
\item In genus $8$, the unique maximally symmetric graph is the Heawood graph. This graph has an effective divisor of degree $7$ and rank $2$ and is hence special. 
\end{itemize}

\begin{figure}[h!]
\begin{tikzpicture}
[scale = 2]
\coordinate (0) at (-.5,0);
\coordinate (1) at (.5,0);
\coordinate (2) at (.75,.33);
\coordinate (3) at (.75,.66);
\coordinate (4) at (.5,1);
\coordinate (5) at (-.5,1);
\coordinate (6) at (-.75,.66);
\coordinate (7) at (-.75,.33);
\coordinate (8) at (1,.33);
\coordinate (9) at (1,.66);
\coordinate (10) at (-1,.66);
\coordinate (11) at (-1,.33);

\node (N0) at (-1.2,.25) {\small{$3$}};\node (N1) at (.6,-.14) {\small{$1$}};

\draw[ball color=black] (0) circle (0.3mm);
\draw[ball color=black] (1) circle (0.3mm);
\draw[ball color=black] (2) circle (0.3mm);
\draw[ball color=black] (3) circle (0.3mm);
\draw[ball color=black] (4) circle (0.3mm);
\draw[ball color=black] (5) circle (0.3mm);
\draw[ball color=black] (6) circle (0.3mm);
\draw[ball color=black] (7) circle (0.3mm);
\draw[ball color=black] (8) circle (0.3mm);
\draw[ball color=black] (9) circle (0.3mm);
\draw[ball color=black] (10) circle (0.3mm);
\draw[ball color=black] (11) circle (0.3mm);
\draw (0)--(1); \draw (1)--(2); \draw (2)--(3);\draw (3)--(4);\draw (4)--(5);\draw (5)--(6);\draw (6)--(7);\draw (7)--(0);\draw (1)--(8);\draw (8)--(9);\draw (9)--(4);\draw (5)--(10);\draw (10)--(11);\draw (11)--(0);\draw (2)--(9);\draw (3)--(8);\draw (7)--(10);\draw (6)--(11);
\end{tikzpicture}
\caption{The maximally symmetric simple graph of genus 7. The chip configuration depicted corresponds to a special divisor of degree 4 and rank 1.}
\label{fig: genus-7-simple}
\end{figure}
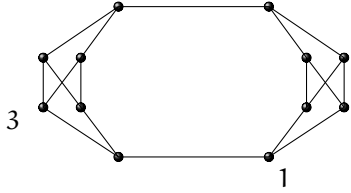

We observe that genus $8$ is the only case we find where a maximally symmetric graph has Brill--Noether general gonality, but has special divisors of higher rank. The divisor $7v$ for any vertex $v$ is a rank $2$ special divisor. 

\begin{remark}\textnormal{
In recent work, Jensen~\cite{J14} showed that the Haewood graph is a Brill--Noether special \textbf{metric} graph for arbitrary edge lengths, thus showing that the locus of Brill--Noether general metric graphs is not dense in the tropical moduli space.}
\end{remark}

\subsubsection{Higher Genus Cases}

In this section, we prove that for genus at least $9$, the maximally symmetric trivalent graph is not Brill--Noether general. We extensively use the analysis of maximally symmetric trivalent graphs in~\cite{OV10}. The proof demands some technical lemmas regarding the equivalence of divisors on certain types of graphs.

By a \textbf{pinched tetrahedron} or a \textbf{pinched $K_{3,3}$} we mean the graphs depicted in Figure~\ref{fig: pinched}. These are obtained from the standard tetrahedron or $K_{3,3}$ by adding a new vertex to subdivide an edge. 

\begin{figure}[h!]
\centering
\begin{minipage}[b]{0.45\linewidth}
\hspace{13mm}
\begin{tikzpicture}
\coordinate (0) at (0,1);
\coordinate (1) at (-1,0);
\coordinate (2) at (0,-1);
\coordinate (3) at (1,0);
\coordinate (4) at (0,0);

\draw[ball color=black] (0) circle (0.5mm);
\draw[ball color=black] (1) circle (0.5mm);
\draw[ball color=black] (2) circle (0.5mm);
\draw[ball color=black] (3) circle (0.5mm);
\draw[ball color=black] (4) circle (0.5mm);
\draw (0)--(1)--(2)--(3)--(0); \draw (4)--(0); \draw (4)--(1); \draw (4)--(3);
\end{tikzpicture}
\caption*{Pinched Tetrahedron}

\end{minipage}
\quad
\begin{minipage}[b]{0.45\linewidth}
\hspace{10mm}
\begin{tikzpicture}
\coordinate (0) at (-1.25,0);
\coordinate (1) at (-0.75,0.5);
\coordinate (2) at (0.75,0.5);
\coordinate (3) at (1.25,0);
\coordinate (4) at (-0.75,-0.5);
\coordinate (5) at (0.75,-0.5);
\coordinate (6) at (0,-2);

\draw[ball color=black] (0) circle (0.5mm);
\draw[ball color=black] (1) circle (0.5mm);
\draw[ball color=black] (2) circle (0.5mm);
\draw[ball color=black] (3) circle (0.5mm);
\draw[ball color=black] (4) circle (0.5mm);
\draw[ball color=black] (5) circle (0.5mm);
\draw[ball color=black] (6) circle (0.5mm);

\draw (0)--(1)--(2)--(3)--(5)--(4)--(0);\draw (1)--(5);\draw (2)--(4);\draw (0)--(6)--(3);
\end{tikzpicture}
\caption*{Pinched $K_{3,3}$}
\end{minipage}
\caption{}
\label{fig: pinched}

\end{figure}

\begin{definition}[{\cite[Definition~4.1]{OV10}}]
Let $A_m$ denote the graph formed by attaching a pinched tetrahedron (at its unique bivalent vertex) to each leaf of the tree $T_{2^m}$. Similarly, let $B_m$ ($m\geq 2$) denote the graph formed by attaching a pinched $K_{3,3}$ to each leaf of the tree $T_{2^{m-2}}$. Observe that $A_m$ has genus $3\cdot 2^m$ and $B_m$ has genus $2^m$.
\end{definition}

For $\Gamma = A_m$ or $B_m$, we let $T$ denote the subtree of $\Gamma$ obtained by discarding the attached pinched $K_{3,3}$'s or tetrahedra. Let $K$ be a pinched $K_{3,3}$ or tetrahedron. We abuse notation slightly, allowing $K$ to be ambiguous in denoting these subgraphs. $T$ is attached to each $K$ precisely one leaf vertex. The following lemma asserts that any two divisors of the same degree that are supported on $T$ are equivalent.

\begin{lemma}\label{lem1}
Let $\Gamma = A_m$ or $B_m$ be as above. Let $D$ and $D'$ be divisors on $\Gamma$ of the form $D=nv$ and $D'=nw$, $n\in \mathbb{Z}$ and $v,w\in V(T)$. Then $D\sim D'$.
\end{lemma}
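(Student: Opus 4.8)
The plan is to reduce the statement to the case where $v$ and $w$ are vertices of the tree $T$ that are adjacent, or more precisely to exploit the bridge-edge argument already used in the proof of Theorem~\ref{thm: loops}. The key structural observation is that the tree $T$ sits inside $\Gamma$ attached to each pinched $K_{3,3}$ or tetrahedron $K$ at a single leaf vertex, so that every edge of $T$ (and in particular every edge connecting two tree-vertices) is a \emph{bridge edge} of $\Gamma$: deleting such an edge disconnects $\Gamma$. Once this is established, the result follows essentially for free.

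First I would record the bridge-edge lemma in the form used earlier: if $e$ is a bridge edge with endpoints $x$ and $x'$, then firing every vertex in the connected component of $\Gamma \setminus e$ containing $x$ moves exactly one chip from $x$ to $x'$ (since $e$ is the unique edge crossing the cut), which shows $x \sim x'$ as divisors. Iterating this and rescaling by $n$ shows that $nx \sim nx'$ whenever $x$ and $x'$ are joined by a bridge edge. Next I would argue that $T$, being a tree attached to the rigid subgraphs $K$ only at leaf vertices, has the property that every one of its internal edges is a bridge of the whole graph $\Gamma$: removing such an edge separates the two subtrees of $T$ together with their attached copies of $K$. Because $T$ is connected, any two vertices $v,w \in V(T)$ are joined by a path of bridge edges lying entirely within $T$, so composing the equivalences $nv_i \sim nv_{i+1}$ along this path gives $nv \sim nw$.

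The main obstacle, and the place I would spend the most care, is verifying the bridge claim rigorously for the specific combinatorial structure of $A_m$ and $B_m$. One must check that the only cycles of $\Gamma$ live inside the pinched tetrahedra or pinched $K_{3,3}$'s, and hence no edge of $T$ can lie on a cycle; equivalently, that the genus is entirely concentrated in the attached subgraphs $K$. This is plausible from the construction (the $T_{2^m}$ or $T_{2^{m-2}}$ is literally a tree, and each $K$ is attached by identifying a single vertex, contributing no new cut-crossing edges along $T$), but it needs to be stated cleanly, perhaps by noting that $g(\Gamma)$ equals the sum of the genera of the attached pieces so that the first Betti number receives no contribution from $T$. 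A mild subtlety is the leaf vertices of $T$ at which a copy of $K$ is glued: the gluing vertex is shared, so one should be careful to treat $v,w \in V(T)$ as honest tree vertices and to connect them by a path that never needs to enter a copy of $K$, which is automatic since $T$ is itself connected. With these points settled, the chain of bridge-edge equivalences completes the proof.
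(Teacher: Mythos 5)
Your proposal is correct and takes essentially the same route as the paper: both arguments observe that every edge of $T$ is a bridge of $\Gamma$ (since all cycles are confined to the attached pinched subgraphs), reduce via the path in $T$ to the case of two vertices joined by a single bridge edge, and then chip-fire across that bridge. The only difference is the choice of firing set—you fire the entire component of $\Gamma \setminus e$ containing $v$, which is exactly the standard bridge argument quoted in the paper's proof of Theorem~\ref{thm: loops}, whereas the paper's proof of the lemma fires the nearest pinched subgraph $K$ together with the vertices of $T$ between $K$ and $v$; your version is, if anything, the cleaner instantiation of the same idea.
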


\textit{Proof.} The proof of this result is similar to that of Theorem~\ref{thm: loops}. As there is a unique path of bridge edges between any two vertices of $T$, it suffices to show that $v\sim v'$, where $v$ and $v'$ are vertices connected by a single bridge edge. We may assume that $v'$ is closer to the root vertex of $T$ than $v$ is. There is a unique closest pinched $K_{3,3}$ or tetrahedron of $\Gamma$ to $v$. Denote this subgraph $K$. Firing each vertex in $K$ as well as those vertices in $T$ between $K$ and $v$ (including $v$) results in the cancelation of each chip except the one that arrives at $v'$, proving the claim. \qed

The following lemma will provide a source of special divisors in the proof of the main theorem.

\begin{lemma}\label{lem2}
Let $v$ be the unique vertex shared by $K$ and $T$. Suppose $D=4v$ and $E=-w$ for some vertex $w\in K$. Then $D+E$ is equivalent to an effective divisor.
\end{lemma}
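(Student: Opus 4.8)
We must show that $D+E = 4v - w$ is linearly equivalent to an effective divisor, where $v$ is the bivalent attaching vertex shared by the subtree $T$ and a pinched tetrahedron or pinched $K_{3,3}$ subgraph $K$, and $w$ is an arbitrary vertex of $K$. The plan is to work entirely inside $K$: since $v$ already carries four chips and $K$ has only finitely many vertices (five for the pinched tetrahedron, seven for the pinched $K_{3,3}$), it suffices to exhibit, for each choice of $w$, an explicit sequence of chip-firing moves supported on $V(K)$ that converts $4v - w$ into an effective divisor, keeping the single negative chip from ever ``escaping'' across the bridge into $T$.

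\textbf{First reduction.} The key structural observation is that $v$ is bivalent in $\Gamma$: it meets one bridge edge leading into $T$ and exactly one edge entering the body of $K$. Firing the entire set $V(K)\setminus\{v\}$ (equivalently, firing the complement) moves chips around only within $K$ together with the bridge, so I first argue that I may assume $w \in K$ without loss, and that any firing I perform on subsets of $V(K)$ not containing $v$ will not disturb the chips sitting in $T$. Concretely, because $v$ has a single edge into $K$, the vertex of $K$ adjacent to $v$ is the only place in $K$ that can interact with $v$, and this localizes the whole computation to the small graph $K$ with $4$ chips placed at its boundary vertex.

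\textbf{Main step: case analysis on $K$.} The heart of the proof is a finite check. For the pinched tetrahedron, $K$ has five vertices: the attaching vertex $v$, the subdivision vertex, and the three original tetrahedron vertices. For each of the (at most five) positions of $w$, I would fire a suitable subset of $V(K)$ starting from $4v$ to spread the chips so that at least one chip lands on $w$; then $4v - w$ becomes effective because the spread-out configuration dominates the single anti-chip at $w$. The same is done for the pinched $K_{3,3}$, whose seven vertices admit an analogous finite enumeration. Because the graph is trivalent away from $v$, firing any interior vertex redistributes three chips to its neighbors, so starting from four chips concentrated at $v$ one can reach every vertex of $K$ with a nonnegative configuration; the degree is $4 - 1 = 3 > 0$, leaving enough slack to absorb the deficit at $w$. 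The symmetry of $K$ (the automorphisms fixing $v$) collapses these cases to just two or three essentially distinct positions for $w$, so the enumeration is short.

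\textbf{Expected obstacle.} The main difficulty is purely bookkeeping: ensuring that in each case the firing sequence never produces a negative entry at an interior vertex \emph{before} the anti-chip at $w$ is cancelled, and confirming that the bridge vertex $v$ never sends its chips back into $T$ (which would change the divisor's support and invalidate the later application in the proof of Theorem~\ref{thm: simple-graphs}). I expect the cleanest route is to use Dhar's burning algorithm or the $v$-reduced form: reduce $4v - w$ relative to a base point inside $K$, and observe that the reduced divisor has nonnegative degree concentrated away from the bridge, hence is effective. This reframing replaces the ad hoc firing sequences with a single canonical computation per graph and makes the ``no escape into $T$'' condition automatic, since $v$-reduction with respect to a vertex of $K$ only fires subsets on the $K$-side of the bridge.
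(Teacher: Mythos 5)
Your overall strategy --- reduce to a computation inside $K$, then do a finite check --- is the same as the paper's, but the proposal contains a structural error and omits the computation that is the actual content of the lemma. First, the structure: $v$ is \emph{not} bivalent in $\Gamma$ with ``exactly one edge entering the body of $K$.'' Since $v$ is the subdivision vertex of the pinched tetrahedron or pinched $K_{3,3}$, it has \emph{two} neighbors $v_1,v_2$ inside $K$, in addition to the bridge edge into $T$; it is trivalent in $\Gamma$ and bivalent as a vertex of $K$. (Your inventory of the pinched tetrahedron's vertices as ``the attaching vertex $v$, the subdivision vertex, and the three original tetrahedron vertices'' reflects this confusion: $v$ \emph{is} the subdivision vertex, and there are four original tetrahedron vertices.) This matters because the firing sequence that proves the lemma uses both neighbors of $v$: the paper fires $v$ once when $w\in\{v_1,v_2\}$, and otherwise fires $v$ three times and then fires $v_1$ and $v_2$, which a direct check shows yields an effective divisor in both the pinched tetrahedron and the pinched $K_{3,3}$. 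Also, your worry that chips must never cross the bridge into $T$ is a non-issue: Lemma~\ref{lem1} lets one move any chip landing on $u$ (the $T$-neighbor of $v$) back to $v$, and this is exactly how the paper justifies working solely inside $K$.

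Second, and more seriously, your ``main step'' is a promise rather than a proof: no firing sequence is exhibited for even one position of $w$. The justification offered --- that four chips at $v$ can be spread so as to ``reach every vertex of $K$ with a nonnegative configuration'' dominating the anti-chip --- cannot be right as stated, since $4v-w$ has degree $3$, which is less than the number of vertices of $K$ (five or seven); effectiveness of the class is a genuine equivalence computation, not a consequence of having ``enough slack.'' The reduced-divisor reformulation has the same defect: a $q$-reduced divisor is effective if and only if its class contains an effective divisor, so ``observe that the reduced divisor \dots\ is effective'' is precisely the assertion to be proved, and nonnegative total degree does not imply it. Indeed, the pinched $K_{3,3}$ has genus $4$, so a degree-$3$ divisor class on it is below the threshold $d\geq g$ at which Riemann--Roch for graphs would guarantee an effective representative for free; for the pinched tetrahedron, which has genus $3$, Riemann--Roch would in fact close the gap instantly, but that argument appears nowhere in your write-up. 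To complete the proof you must either run Dhar's algorithm in each of the finitely many cases and display the resulting reduced divisors, or give the explicit firings, as the paper does.
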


\begin{proof}
Let $v$ be the unique vertex shared by $T$ and $K$, and let $v_1$ and $v_2$ be the vertices of $K$ that are adjacent to $v$. Let $u$ be the unique vertex of $T$ adjacent to $v$. By Lemma~\ref{lem1}, the chips on the vertex $u$ can always be moved to $v$, and thus we may reduce to working solely with the graph $K$, having $v$ as its unique $2$-valent vertex. It suffices to show that $4v-w$ is equivalent to effective. If $w$ is either $v_1$ or $v_2$, we may fire $v$ to obtain an effective divisor. Otherwise, fire $v$ three times, and then fire $v_1$ and $v_2$ to obtain an effective divisor.
\end{proof}

\begin{figure}
\begin{tikzpicture}
\coordinate (0) at (-1.25,0);
\coordinate (1) at (-0.75,0.5);
\coordinate (2) at (0.75,0.5);
\coordinate (3) at (1.25,0);
\coordinate (4) at (-0.75,-0.5);
\coordinate (5) at (0.75,-0.5);
\coordinate (6) at (0,-2);
\coordinate (7) at (0,-3);

\draw (0)--(1)--(2)--(3)--(5)--(4)--(0);\draw (1)--(5);\draw (2)--(4);\draw (0)--(6)--(3);

\draw[ball color=black] (0) circle (0.5mm);
\draw[ball color=black] (1) circle (0.5mm);
\draw[ball color=black] (2) circle (0.5mm);
\draw[ball color=black] (3) circle (0.5mm);
\draw[ball color=black] (4) circle (0.5mm);
\draw[ball color=black] (5) circle (0.5mm);
\draw[ball color=black] (6) circle (0.5mm);

\draw[densely dotted] (7)--(0,-2.5);
\draw (0,-2.5)--(6);

\node at (0.5,-2) {\tiny $4(v)$};
\node at (0.75,0.8) {\tiny $-1(w)$};
\node at (-1.55,0) {\tiny $v_2$};
\node at (1.55,0) {\tiny $v_1$};

\end{tikzpicture}
\caption{The divisor $D+E$ in Lemma~\ref{lem2}.}
\end{figure}
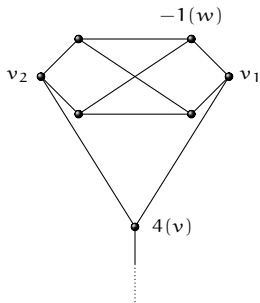

The graphs $A_m$ and $B_m$ have unique bivalent vertices, namely the roots of their subtrees. For certain $g$, maximally symmetric graphs can be constructed by attaching copies of the $A_m$ or $B_m$ to cycle graphs $G_n$ $(n\geq 3)$. This is done by introducing an edge between the unique bivalent vertices of the $A_m$ or $B_m$ graphs, and the vertices of the cycle. Consequently, the resulting graph is trivalent.

\begin{lemma}\label{lem3}
Let $\Gamma$ be defined by attaching a copy of $A_m$ or $B_m$ to each vertex  $v_1,...,v_n$ in a cycle graph $G_n$ of length $n$. Let $D=k\cdot nv_i$ and $D' = k\cdot nv_j$, $k\in \mathbb{Z}$. Then $D\sim D'$.
\end{lemma}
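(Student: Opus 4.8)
The plan is to reduce the statement to the classical computation of divisor classes on a bare cycle, by bundling each attached subgraph together with its cycle vertex into a single firing set. Write $H_i$ for the copy of $A_m$ or $B_m$ attached to the cycle vertex $v_i$, so that $v_i$ is joined to the unique bivalent (root) vertex of $H_i$ by a single edge $e_i$, which is a bridge. Because linear equivalence is an equivalence relation and firing scripts may be composed, it suffices to treat the case of adjacent cycle vertices, say $n v_i \sim n v_{i+1}$ (indices mod $n$); the general statement $k\cdot n v_i \sim k\cdot n v_j$ then follows by chaining these equivalences around the cycle and by scaling. Indeed, if $n v_i - n v_{i+1} = L z$ for the Laplacian $L$ of $\Gamma$ and an integer firing vector $z$, then $kn v_i - kn v_{i+1} = L(kz)$, so it is enough to produce the unscaled equivalence.

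The key observation is a set-firing correspondence. For each cycle vertex let $S_i := \{v_i\} \cup V(H_i)$. The only edges joining $S_i$ to its complement are the two cycle edges at $v_i$, namely $v_{i-1}v_i$ and $v_i v_{i+1}$; every edge incident to a vertex of $H_i$, together with the bridge $e_i$, lies inside $S_i$. Hence firing the set $S_i$ removes exactly two chips from $v_i$, adds one chip to each of $v_{i-1}$ and $v_{i+1}$, and changes no other vertex. Since $D$ and $D'$ have weight $0$ on every $H_i$, any sequence of such set-firings keeps each vertex of each $H_i$ at weight $0$ and acts on the cycle vertices exactly as the corresponding sequence of vertex-firings acts on the bare cycle $G_n$, in which each $v_i$ is $2$-valent. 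In this precise sense, the chip-firing of divisors supported on $V(G_n)$ under the moves ``fire $S_i$'' matches the chip-firing dynamics of the plain cycle $G_n$.

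It therefore suffices to prove $n v_i \sim n v_{i+1}$ on the bare cycle $G_n$. This is the classical fact that the critical group (Jacobian) of $G_n$ is cyclic of order $n$, generated by the class of $v_i - v_{i+1}$, so that $n(v_i - v_{i+1})$ is principal. Concretely, one solves the discrete Laplace equation $L_{G_n}\, x = n v_i - n v_{i+1}$ over $\mathbb{Z}$ for a firing vector $x$; taking $x$ to be the integer potential that is affine along the arc of $G_n$ avoiding the edge $v_i v_{i+1}$ yields an explicit script, which lifts through the correspondence above to a firing script on $\Gamma$. Composing and scaling as in the first paragraph then gives $k\cdot n v_i \sim k\cdot n v_j$ for all $i,j$ and all $k \in \mathbb{Z}$.

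The main obstacle, relative to the earlier arguments, is that the cycle edges are not bridges, so the device used in Theorem~\ref{thm: loops} and Lemma~\ref{lem1} --- firing a connected component across a bridge to transport a single chip --- does not directly move chips around $G_n$. The bundling in the second paragraph is exactly what circumvents this: collapsing each pendant subgraph $H_i$ onto its cycle vertex converts the problem into chip-firing on a plain cycle, where the required equivalence is controlled by the factor $n$ coming from the order of the critical group. The one point needing care is that these set-firings never deposit chips inside any $H_i$, which is immediate from the boundary-edge count above; beyond being attached by a single edge, the internal structure of $A_m$ and $B_m$ plays no role.
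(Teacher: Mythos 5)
Your proof is correct and takes essentially the same approach as the paper's: firing your set $S_i$ is exactly the paper's composite move of firing $v_i$ and then transporting the chip deposited on $u_i$ back across the bridge (i.e., firing all of $H_i$), and both arguments then reduce to the bare cycle $G_n$. Even the final cycle computation coincides, since the paper's explicit script (fire $v_\ell$ a total of $2n-\ell+1$ times) is precisely the affine integer potential you use to solve $L_{G_n}x = nv_i - nv_{i+1}$.
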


\begin{proof} 
Let $u_i$ be the unique vertex adjacent to $v_i$ that does not lie on the cycle $G_n$. As in the proof of Lemma~\ref{lem1}, since $u_i$ and $v_i$ are connected by a bridge edge, we know that $u_i\sim v_i$. As such, any chip-firing move that places a nonzero weight on $u_i$ may be followed by a move that moves this weight to $v_i$. Thus we can treat the cycle $G_n$ in isolation.

We will show that $nv_1\sim nv_n$ from which the result immediately follows. To see this, we perform the following sequence of chip firing moves. Fire $v_1$ $2n$-times, fire $v_2$ $(2n-1)$-times, and so on. In other words, we fire the vertex $v_\ell$ $(2n-\ell+1)$-times. It is elementary to check that the resulting divisor has nonzero chips only on $v_n$. Since chip firing preserves degree, we see that this sequence of moves yields $nv_n$, and the result follows.
\end{proof}

We will also need the following two facts.

\begin{lemma}\label{lem4}
Let $G$ be defined by attaching three copies of $A_m$ or $B_m$ to the degree 2 vertices of $K_{2,3}$. For any $n\in \mathbb{Z}$ and choices $v$ and $w$ of degree 2 vertices in $K_{2,3}$, the divisors $D=(2n)v$ and $D'=(2n)w$ are equivalent.
\end{lemma}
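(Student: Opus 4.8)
The plan is to reduce everything to the case $n=1$, that is, to prove $2v\sim 2w$, and then recover the general statement by simply repeating the chip-firing sequence $n$ times. Write $a$ and $b$ for the two trivalent vertices of the central $K_{2,3}$, so that each degree-two vertex $v_i$ is joined to both $a$ and $b$, and, after attachment, also to the root $r_i$ of the copy of $A_m$ or $B_m$ placed on it. For each $i$ let $S_i$ denote the full vertex set of that attached copy (so $r_i\in S_i$), and note that $S_i$ meets the rest of $\Gamma$ only along the single bridge edge $v_ir_i$. This is exactly the local picture exploited in Lemma~\ref{lem1}, but here it is cleaner to fire whole blocks rather than to reduce first.

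The key observation is that firing the entire block $S_i\cup\{v_i\}$ at once is a legitimate composition of single-vertex firings, and its net effect is governed only by the edges leaving this set. Since the only such edges are $v_ia$ and $v_ib$ (the edge $v_ir_i$ is internal to the block, and all edges inside the attached copy cancel in pairs), firing $S_i\cup\{v_i\}$ once sends $D\mapsto D-2v_i+a+b$, with nothing left behind inside $S_i$. Firing its complement once is the inverse move and sends $D\mapsto D+2v_i-a-b$. Thus, starting from $D=2w$, I would first fire the block $S_w\cup\{w\}$ once to reach the divisor $a+b$, and then fire the complement of $S_v\cup\{v\}$ once to reach $2v$. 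This exhibits $2w\sim 2v$ using two set-firing moves; applying the same pair of moves $n$ times then yields $2n\cdot w\sim 2n\cdot v$, as required.

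The one point needing care—and the one that explains why the statement asks for an even coefficient—is that a degree-two vertex of $K_{2,3}$ meets the core in exactly two edges, so each block-firing move transports chips into or out of the core strictly two at a time. I expect this parity constraint to be the only genuine obstacle. Indeed, an odd multiple of $v$ can never be balanced: the corresponding linear system $Lx=w-v$ forces $x_a=x_b$ (from the rows at $a$ and $b$, both reading $3x_a=\sum_i x_{v_i}$) and then demands $2x_v-2x_a=-1$, which is impossible over $\ZZ$. Hence it is precisely the factor $2$ in the hypothesis that makes the system solvable, and the block-firing description above produces an explicit solution.
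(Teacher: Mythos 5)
Your proof is correct and follows essentially the same route as the paper: both reduce to $n=1$ and show $2v_i\sim a+b$ for each degree-two vertex $v_i$, the paper by firing $v_i$ alone and then invoking the bridge-edge equivalence of Lemma~\ref{lem1} to absorb the stray chip on the attached root, you by firing the block $S_i\cup\{v_i\}$ in one step---which is exactly the composition of those same single-vertex firings. Your concluding parity remark is a correct bonus not in the paper (though the equation $2x_v-2x_a=-1$ also implicitly uses $x_{r_v}=x_v$, which follows by summing the Laplacian rows over the attached copy), but it is not needed for the lemma itself.
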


\begin{proof}
It suffices to show the $n = 1$ case, so take $D = 2(v_1)$. Denote by $w_1,w_2,w_3$ the degree three vertices of $K_{2,3}$. Fire $v_1$ to obtain an equivalence
\[
D\sim (w_1)+(w_2)-(v_1)+(u_1),
\]
where $v$ is the vertex of $A_m$ or $B_m$ adjacent to $v_1$. As in Lemma~\ref{lem1}, since $u_1$ and $v_1$ are attached by a bridge edge, we see that $v_1\sim v$ and thus, $D\sim w_1+w_2$. Now fire $v_2$ and use this again to conclude $D\sim 2(v_2)$.
\end{proof}

The proof of the following lemma follows similarly to the one above.

\begin{lemma}\label{lem5}
Let $G$ be a graph defined by joining together two triangles via the identification of an edge $(w_1,w_2)$. Say $v_1$ and $v_2$ are the other vertices of $G$. Then, if $D=(2n)v_1$ and $D' = (2n)v_2$, $D\sim D'$.
\end{lemma}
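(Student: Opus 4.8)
The plan is to follow the template of Lemma~\ref{lem4} and reduce the entire statement to a single firing move together with the symmetry of $G$. First I would reduce to the case $n=1$: linear equivalence is preserved under addition of divisors (the principal divisors form a subgroup of $\operatorname{Div}(G)$), so once we know $2v_1 \sim 2v_2$ we immediately get $2nv_1 = n(2v_1) \sim n(2v_2) = 2nv_2$ for every $n \in \mathbb{Z}$. Thus it suffices to establish $2v_1 \sim 2v_2$.

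The key computation is a single chip-firing move. In $G$ the vertex $v_1$ is adjacent precisely to $w_1$ and $w_2$, so firing $v_1$ once on the divisor $D = 2v_1$ removes $\deg(v_1)=2$ chips from $v_1$ and deposits one chip on each of $w_1$ and $w_2$. The result is the effective divisor $(w_1)+(w_2)$, whence $2v_1 \sim (w_1)+(w_2)$.

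To finish, I would invoke the symmetry of $G$: reflection across the identified edge $(w_1,w_2)$ is an automorphism interchanging $v_1$ and $v_2$ while fixing $w_1$ and $w_2$. Applying the identical firing move to $v_2$ therefore yields $2v_2 \sim (w_1)+(w_2)$, and transitivity of linear equivalence gives $2v_1 \sim 2v_2$, as desired.

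I do not anticipate a genuine obstacle here; the argument is an essentially immediate chip-firing calculation. The only points meriting care are the reduction from general $n$ to $n=1$, which follows from the additivity of chip-firing relations, and the observation (exactly as in Lemma~\ref{lem4}) that the conclusion is robust to whatever the ambient maximally symmetric graph attaches to $v_1$ and $v_2$ through bridge edges: if $v_i$ carries an extra bridge edge to a vertex $u_i$, then firing $v_i$ produces $(w_1)+(w_2)-(v_i)+(u_i)$, and the bridge equivalence $v_i \sim u_i$ supplied by Lemma~\ref{lem1} again collapses this to $(w_1)+(w_2)$.
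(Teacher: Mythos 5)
Your proposal is correct and matches the paper's intended argument: the paper proves Lemma~\ref{lem5} by saying it ``follows similarly'' to Lemma~\ref{lem4}, i.e.\ reduce to $n=1$, fire $v_1$ to reach $(w_1)+(w_2)$ (collapsing any bridge contribution via the equivalence $v_i\sim u_i$), and conclude $2v_1\sim 2v_2$ by doing the same at $v_2$. Your explicit treatment of the reduction to $n=1$ and of the bridge edges to the attached $A_m$'s or $B_m$'s is exactly the content the paper leaves implicit.
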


We are now ready to prove Theorem~\ref{thm: simple-graphs}. In~\cite{OV10}, van Opstall and Veliche prove sharp numerical bounds for the size of the automorphism group of a genus $g$ trivalent graph. The authors show that these bounds are sharp by explicitly constructing a maximally symmetric simple graph $C_g''$. These graphs are constructed in similar fashion to the cases of multigraphs. We now show that this graph $C_g''$ is special for sufficiently large genus. For an explicit construction of the graphs $C_g''$ we refer the reader to the aforementioned paper. \\

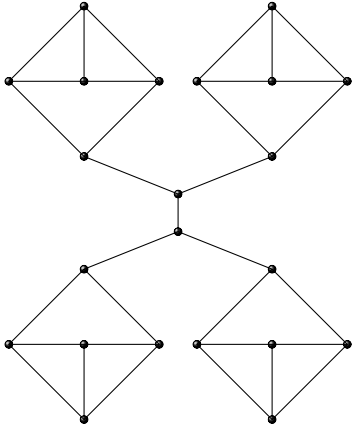
\begin{figure}[h!]
\begin{tikzpicture}
\coordinate (A) at (0,1);
\coordinate (B) at (-1,0);
\coordinate (C) at (0,-1);
\coordinate (D) at (1,0);
\coordinate (E) at (0,0);

\draw[ball color=black] (A) circle (0.5mm);
\draw[ball color=black] (B) circle (0.5mm);
\draw[ball color=black] (C) circle (0.5mm);
\draw[ball color=black] (D) circle (0.5mm);
\draw[ball color=black] (E) circle (0.5mm);
\draw (A)--(B)--(C)--(D)--(A); \draw (E)--(C); \draw (E)--(B); \draw (E)--(D);

\coordinate (A') at (2.5,1);
\coordinate (B') at (1.5,0);
\coordinate (C') at (2.5,-1);
\coordinate (D') at (3.5,0);
\coordinate (E') at (2.5,0);

\draw[ball color=black] (A') circle (0.5mm);
\draw[ball color=black] (B') circle (0.5mm);
\draw[ball color=black] (C') circle (0.5mm);
\draw[ball color=black] (D') circle (0.5mm);
\draw[ball color=black] (E') circle (0.5mm);
\draw (A')--(B')--(C')--(D')--(A'); \draw (E')--(C'); \draw (E')--(B'); \draw (E')--(D');

\coordinate (A'') at (2.5,4.5);
\coordinate (B'') at (1.5,3.5);
\coordinate (C'') at (2.5,2.5);
\coordinate (D'') at (3.5,3.5);
\coordinate (E'') at (2.5,3.5);

\draw[ball color=black] (A'') circle (0.5mm);
\draw[ball color=black] (B'') circle (0.5mm);
\draw[ball color=black] (C'') circle (0.5mm);
\draw[ball color=black] (D'') circle (0.5mm);
\draw[ball color=black] (E'') circle (0.5mm);
\draw (A'')--(B'')--(C'')--(D'')--(A''); \draw (E'')--(A''); \draw (E'')--(B''); \draw (E'')--(D'');

\coordinate (A''') at (0,4.5);
\coordinate (B''') at (-1,3.5);
\coordinate (C''') at (0,2.5);
\coordinate (D''') at (1,3.5);
\coordinate (E''') at (0,3.5);

\draw[ball color=black] (A''') circle (0.5mm);
\draw[ball color=black] (B''') circle (0.5mm);
\draw[ball color=black] (C''') circle (0.5mm);
\draw[ball color=black] (D''') circle (0.5mm);
\draw[ball color=black] (E''') circle (0.5mm);
\draw (A''')--(B''')--(C''')--(D''')--(A'''); \draw (E''')--(A'''); \draw (E''')--(B'''); \draw (E''')--(D''');

\coordinate (X) at (1.25,2);
\coordinate (Y) at (1.25,1.5);

\draw[ball color=black] (X) circle (0.5mm);
\draw[ball color=black] (Y) circle (0.5mm);

\draw (A)--(Y)--(A');\draw (Y)--(X);\draw (C'')--(X)--(C''');

\end{tikzpicture}
\caption{The graph $C_{12}''$.}
\label{fig: pinched}

\end{figure}

\noindent\textit{Proof of Theorem~\ref{thm: simple-graphs}.} In each case, components $A_m$ or $B_m$ occur in $C_g''$; thus, the tree $T$ appears as well.

\textit{Case 1.} Suppose that $C''_g$ consists of some collection of $A_m$'s or $B_m$'s joined at a common root, a common edge or path, by a $K_{2,3}$, by a square, or by two triangles that share a common edge. Define $D=4v$ for any vertex $v\in T$. $D$ has rank at least one. This is shown by Lemmas~\ref{lem1} and~\ref{lem2}, together with one of Lemmas~\ref{lem3},~\ref{lem4} or~\ref{lem5} depending on whether a square, $K_{2,3}$ or two triangles appear in the graph. Furthermore, $\rho_4^1(g)<0$ when $g>6$ as is the case here, whence the result follows.

\textit{Case 2.} Suppose that $C''_g$ consists of some collection of $A_m$'s or $B_m$'s joined to the vertices of a triangle or to a path in which a triangle appears at one end. Let $v$ and $w$ be two distinct vertices of the triangle. Define $D=3v+2w$. Lemmas~\ref{lem1} to~\ref{lem3} guarantee that $D$ has rank at least 1. Furthermore, $\rho_5^1(g)<0$ when $g>8$ as is the case here, whence the result follows.

\textit{Case 3.} Suppose that $C''_g$ consists of some collection of $A_m$'s or $B_m$'s joined to the vertices of a pentagon. Define $D=5v$ for some vertex $v\in T$. Then, Lemmas~\ref{lem1} to~\ref{lem3} guarantee that $D$ has rank at least 1. Furthermore, $\rho_5^1(g)<0$ when $g>8$ as is the case here, whence the result follows.

Together these comprise all possible configurations for $C_g''$, and the result follows. \qed

\bibliographystyle{plain}
\bibliography{MaxSymmetricGraphs}
\nocite{*}
\end{document}